\newtheorem{theorem}{Theorem}
\newtheorem{lemma}[theorem]{Lemma}
\newtheorem{proposition}[theorem]{Proposition}
\newtheorem{corollary}[theorem]{Corollary}
\newtheorem{remark}[theorem]{Remark}
\newtheorem*{theorem*}{Theorem}
\newtheorem*{claim*}{Claim}
\newfont\bbf{msbm10 at 12pt}
\def\eps{\varepsilon}
\def\N{{\mathbb N}}
\def\Z{{\mathbb Z}}
\def\P{{\mathcal P}}
\def\R{{\mathcal R}}
\def\min{{\hbox{{\rm min}}}}
\def\max{{\hbox{{\rm max}}}}
\def\Per{{\hbox{{\rm Per}}}}
\def\id{{\hbox{{\rm id}}}}
\def\le{\leqslant}
\def\ge{\geqslant}
\def\1{ {\hbox{{\it 1}} \!\! I} }
\def\eps{\varepsilon}
\newcommand{\Aut}{\mathrm{Aut}}
\def\Cl{C_\lambda}
\begin{document}
\title[Nowhere monotonicity and invertibility a.e.]
{Generic continuous Lebesgue measure-preserving interval maps are nowhere monotone but invertible a.e.}
\author[Bobok]{Jozef Bobok}
\author[\v Cin\v c]{Jernej \v Cin\v c}
\author[Oprocha]{Piotr Oprocha}
\author[Troubetzkoy]{Serge Troubetzkoy}

\date{\today}

\address[J.\ Bobok]{Department of Mathematics of FCE, Czech Technical University in Prague,
Th\'akurova 7, 166 29 Prague 6, Czech Republic}
\email{jozef.bobok@cvut.cz}

\address[J.\ \v{C}in\v{c}]{University of Maribor, Koro\v ska 160, 2000 Maribor, Slovenia
 -- $\&$ -- 
	Abdus Salam International Centre for Theoretical Physics (ICTP), Trieste, Italy}
\email{jernej.cinc@um.si}

\address[P.\ Oprocha]{
Centre of Excellence IT4Innovations - Institute for Research and Applications of Fuzzy Modeling, University of Ostrava, 30. dubna 22, 701 03 Ostrava 1, Czech Republic}
\email{piotr.oprocha@osu.cz}

\address[S.\ Troubetzkoy]{Aix Marseille Univ, CNRS, I2M, Marseille, France}
\email{serge.troubetzkoy@univ-amu.fr}

\begin{abstract}
We consider continuous maps of the interval which preserve the Lebesgue measure.  
Except for the identity map or $1 - \id$ all such maps have topological entropy at least $\log2/2$ and generically they have infinite topological entropy.  
In this article we show that the
generic map with respect to the uniform topology has zero measure-theoretic
entropy with respect to the Lebesgue measure. This implies that there are dramatic differences in
the topological versus measure-theoretic behavior both for injectivity  as well as for the structure of the
level sets of generic maps. As a consequence we get a surprising corollary for a family of planar attractors homeomorphic to the pseudo-arcs.
\end{abstract}

\maketitle
\section{Introduction}
Genericity of zero measure-theoretic entropy has a long history starting with a result of Rokhlin.
Let $(X,\mu)$ denote a Lebesgue probability space and $\Aut(X,\mu)$ the group of measure preserving
automorphisms of $X$.  Consider the weak topology:
a sequence $\{f_i\} \subset \Aut(X,\mu)$ converges to $f$ if for every measurable subset $A \subset X$ we have
$\mu(f_i(A) \triangle f(A)) \to 0$ as $i \to \infty$
where $\triangle$ denotes the symmetric difference of sets.  This makes $\Aut(X,\mu)$  into a Polish space, i.e., a separable completely metrizable topological space.
In 1959  Rokhlin showed that  the subset of all transformations $T \in \Aut(X, \mu)$ that have zero entropy with
respect to $\mu$ contains a dense $G_\delta$ subset \cite{Ro}.
Later  Rudolf extended this result to countable amenable actions \cite{FoWe}  and  L.\ Bowen further extended this to the non-amenable case \cite{Bo}.

In the late 60s Katok and Stepin developed a quantitative approximation technique and were able to
apply it not only for automorphisms but also for homeomorphisms of a compact manifold $M$ of dimension at least 2 with respect to the uniform topology, i.e.,
induced by the metric
\begin{equation*}
\rho(f,g) := \sup_{x \in M} d(f(x),g(x))
\end{equation*}
where $d$ denotes the standard metric on $M$.
In particular, they showed that the generic Lebesgue  measure-preserving homeomorphisms of $M$ have zero  measure-theoretic entropy with respect to the Lebesgue measure
\cite{KS1,KS2}. These results have modern proofs established by Guih\'eneuf \cite{PAG}.

Zero entropy systems are invertible almost surely, thus such genericity results as described above might not be surprising for invertible systems. It is less clear that similar results 
could hold in the non-invertible setting.  There are only a few known results for non-invertible maps in this direction. In particular, Parry proved the following results  in \cite{Pa}.  In the setting of  the one-sided full 2-shift, Parry considered the weak-topology on the set of invariant  ergodic probability 
measures, and showed that the set of zero entropy (ergodic, fully supported, and non-atomic)  measures is a dense $G_\delta$ set. 
This result immediately translates to the doubling map on the circle.
Then he considered the space $S$ of strictly increasing degree 2 maps of the circle which preserve the Lebesgue measure with the uniform topology.  In this setting he showed  that the set of 
ergodic maps whose  metric entropy with respect to Lebesgue measure is zero is dense $G_\delta$.  

It is clear that Parry's result also holds for full branch unimodal interval maps preserving the Lebesgue measure. The
only other result in this direction is the  generalization of Parry's result to full branch $k$-modal interval maps preserving Lebesgue measure by Friedland and Weiss \cite{FrWe}.

We consider the generalization of this question when we  omit  the (quite strong) piecewise monotone restriction. 
Let $I :=[0,1]$. Define $C_\lambda$ to be the set of continuous maps of $I$ which preserve the Lebesgue 
measure $\lambda$, and denote the measure-theoretic entropy of a map $f \in \Cl$ with respect to $\lambda$ by $h_{\lambda}(f)$.  We consider the  uniform topology.
The first result of this article is that there is a dense $G_\delta$  set of maps in $C_\lambda$ with zero Lebesgue measure-theoretic entropy, more precisely

\begin{theorem}\label{thm1}
The set $\{f \in C_{\lambda} : h_{\lambda}(f) = 0\}$  is a dense $G_\delta$ subset of $C_\lambda$.
\end{theorem}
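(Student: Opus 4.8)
The plan is to realize $\{f\in C_\lambda:h_\lambda(f)=0\}$ as a countable intersection of open dense subsets of $C_\lambda$; since $C_\lambda$ is a closed (hence complete, hence Baire) subspace of $C(I,I)$ with the uniform metric $\rho$, this gives a dense $G_\delta$. For $m\in\N$ let $\P_m$ be the partition of $I$ into the $2^m$ dyadic intervals of length $2^{-m}$. As $(\P_m)_m$ is a refining sequence of finite partitions generating the Borel $\sigma$-algebra of $(I,\lambda)$ mod $0$, the Kolmogorov--Sinai argument (valid for non-invertible maps via $h_\lambda(f,\Q)\le h_\lambda(f,\P_m)+H_\lambda(\Q\mid\P_m)$ together with $H_\lambda(\Q\mid\P_m)\to0$ for every finite $\Q$) gives $h_\lambda(f)=\sup_m h_\lambda(f,\P_m)$ for all $f\in C_\lambda$. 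Consequently
\[
\{f\in C_\lambda:h_\lambda(f)=0\}=\bigcap_{m\in\N}\bigcap_{k\in\N}U_{m,k},\qquad U_{m,k}:=\bigl\{f\in C_\lambda:h_\lambda(f,\P_m)<\tfrac1k\bigr\},
\]
so it suffices to prove each $U_{m,k}$ is open and dense.

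Openness amounts to upper semicontinuity of $f\mapsto h_\lambda(f,\P_m)$. Because $\lambda$ is $f$-invariant, $n\mapsto a_n(f):=H_\lambda\bigl(\bigvee_{i=0}^{n-1}f^{-i}\P_m\bigr)$ is subadditive and $h_\lambda(f,\P_m)=\inf_n a_n(f)/n$; as an infimum of continuous functions it is upper semicontinuous, so I only need each $a_n$ to be continuous. This is where measure preservation enters: if $\rho(f,g)<\eta$ and a Borel set $A$ is approximated by a finite union of intervals $\tilde A$, then the set where $f^{-1}$ and $g^{-1}$ disagree on $\tilde A$ lies in $f^{-1}$ of the $2\eta$-neighbourhood of $\partial\tilde A$, whose $\lambda$-measure is at most $2\eta\cdot\#\partial\tilde A$ since $f_*\lambda=\lambda$; combining this with $\lambda(g^{-1}A\,\triangle\,g^{-1}\tilde A)=\lambda(A\,\triangle\,\tilde A)$ and iterating in $i$ shows $\lambda(f^{-i}A\,\triangle\,g^{-i}A)\to0$ as $g\to f$, uniformly over the finitely many atoms $A$ of $\P_m$ and $i<n$. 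Hence the (at most $2^{mn}$) atoms of $\bigvee_{i<n}g^{-i}\P_m$ converge in the measure algebra to those of $\bigvee_{i<n}f^{-i}\P_m$, so $a_n$ is continuous.

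Density is the substantive point. Given $f\in C_\lambda$, $\eps>0$ and $m,k$, I would construct $g\in C_\lambda$ with $\rho(f,g)<\eps$ admitting a forward-invariant set $B$ ($g(B)\subseteq B$) with $\lambda(B^c)$ prescribed small and with $g|_B$ a piecewise isometry, so that $\htop(g|_B)=0$ (any piecewise $C^1$ map with $|g'|\le1$ has zero topological entropy, by the variational principle and the Ruelle inequality) and the atom count of $\bigvee_{i<n}(g|_B)^{-i}(\P_m|_B)$ grows subexponentially. Then, conditioning $\bigvee_{i<n}g^{-i}\P_m$ on $\{B,B^c\}$, using $g(B)\subseteq B$ to rewrite the $B$-part as $\bigvee_{i<n}(g|_B)^{-i}(\P_m|_B)$, and bounding the $B^c$-part crudely by $\log 2^{mn}$, one obtains
\[
h_\lambda(g,\P_m)\ \le\ \htop(g|_B)+\lambda(B^c)\cdot m\log 2\ =\ \lambda(B^c)\cdot m\log 2\ <\ \tfrac1k
\]
as soon as $\lambda(B^c)<(k\,m\log2)^{-1}$. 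To get such a $g$ near $f$, I would tile $I$ by a large number of tiny equal cells, record the approximate value $t_j$ of $f$ on the $j$-th cell, distribute the cells into a few colour classes one of which has measure $\ge1-\lambda(B^c)$ (this class being $B$), and on each class define $g$ as an isometric permutation $C_j\mapsto C_{\pi(j)}$, with orientations chosen to make $g$ continuous across cell boundaries and with $\pi$ chosen so that $C_{\pi(j)}$ lies within $\eps/2$ of $t_j$. The existence of such a $\pi$ is a marriage-theorem statement whose Hall condition holds \emph{because} $f_*\lambda=\lambda$: for any small window $W$, the number of cells $C_j$ of the class with $t_j\in W$ is comparable to $\lambda(f^{-1}(W)\cap(\text{class}))$ divided by the cell length, while the number of target cells of the class meeting $W$ is comparable to $\lambda(W\cap(\text{class}))$ divided by the cell length, and these agree up to lower order once the class is spread out uniformly at a scale coarse relative to the cell size. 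With the cells and the colouring chosen fine and uniform enough, $g$ is continuous, preserves $\lambda$ (being a disjoint union of isometric permutations of equal cells), is $\eps$-close to $f$ (each cell maps within $\eps/2+\omega_f(\text{cell length})<\eps$ of its $f$-image), and satisfies the displayed bound.

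The \emph{main obstacle} is exactly this construction, which must reconcile four competing requirements simultaneously: Lebesgue preservation, which forces the bulk of $g$ to be rigid (slopes $\pm1$ together with $g(B)\subseteq B$ already force $g|_B$ to be a bijection of $B$, i.e.\ an interval exchange with flips); continuity of $g$ on all of $I$, which constrains the permutations $\pi$ to respect adjacency and dictates how the rigid pieces glue to the small ``transition'' behaviour on $B^c$ (one may have to absorb a few further cells of negligible total measure into $B^c$ to repair continuity); uniform proximity to $f$, which is delicate precisely because $\rho$ is a sup-norm, so $B^c$ must be kept geometrically fine — not merely of small measure — and $\pi$ must genuinely follow the graph of $f$; and the entropy bound itself. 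Verifying the Hall condition uniformly over all colour classes at once — i.e.\ choosing cell size, number of classes, and the spatial layout of the classes so that the marriage theorem applies everywhere — is where the real work lies; once $g$ is in hand, the entropy estimate and the Baire-category assembly are routine.
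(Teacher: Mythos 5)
Your reduction of the statement to $\bigcap_{m,k}U_{m,k}$ and your openness argument are essentially the paper's: Lemma~\ref{l:1} and Proposition~\ref{p:1} establish the continuity of $f\mapsto h_n(f,\P_m)$ for fixed $n$ and a fixed interval partition, and the $G_\delta$ structure then follows by taking the infimum over $n$, exactly as you do. The gap is in the density step, and it is not a technical difficulty that a more careful marriage argument would resolve: \emph{the map $g$ you propose to build does not exist} once $\lambda(B^c)$ is small. Suppose $g\in C_\lambda$ restricts to a piecewise isometry on a finite union of closed intervals $B=\bigsqcup_i B_i$ with $\lambda(B)\ge 1-\delta$. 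For every interval $J$ one has $\lambda(g^{-1}(J))\ge\sum_i\lambda\bigl((g|_{B_i})^{-1}(J)\bigr)=\sum_i\lambda(J\cap g(B_i))=\int_J k$, where $k(y):=\#\{i:y\in g(B_i)\}=\#\bigl(g^{-1}(y)\cap B\bigr)$; since $\lambda(g^{-1}(J))=\lambda(J)$, this forces $k\le 1$ a.e., hence $k=\mathbf{1}_{g(B)}$ a.e.\ and $\lambda(g^{-1}(J)\cap B^c)=\lambda(J\setminus g(B))$. Consequently, for every open $W\subseteq\inter(g(B))$ the set $g^{-1}(W)\cap B^c$ is open and Lebesgue-null, hence empty; so all but finitely many points of $\inter(g(B))$ — a set of measure at least $1-\delta$ — have \emph{exactly one} $g$-preimage in all of $I$. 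On the other hand, every $f\in C_\lambda$ other than $\id$ and $1-\id$ is non-monotone, so there are $a<b<c$ with (say) $f(b)>\max(f(a),f(c))$, and the intermediate value theorem produces a fixed interval $V$ of length $L=L(f)>0$ every value of which is attained at least twice by any $g$ with $\rho(f,g)$ small. As soon as $\delta<L$ these two conclusions contradict each other, and your entropy estimate requires $\delta<(km\log 2)^{-1}$ for arbitrary $k,m$. The four "competing requirements" you flag as the main obstacle are therefore symptoms of an impossibility, not of missing combinatorics: rigidity (slope $\pm1$) on a large union of intervals plus global Lebesgue preservation forces near-global injectivity, which is incompatible with uniform proximity to any non-monotone map. (This tension is precisely the content of Corollary~\ref{cor:ae}\eqref{cor:ae:3}: the invertibility a.e.\ of zero-entropy maps is realized only on a topologically meagre full-measure set, never on a union of intervals.)

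The paper obtains density by an entirely different mechanism: either by quoting \cite[Proposition 24]{BT}, which shows that maps with any prescribed entropy $c\in(0,\infty]$, and in particular with entropy below any given $\beta$, are dense in $C_\lambda$; or, as explained in Section~\ref{sec:Examples}, by using the dense family of (leo, hence highly non-injective) maps constructed in \cite[Theorem 2]{BCOT1} for which $\lambda(\Per(g))=1$ — for such $g$ the ergodic decomposition of $\lambda$ is carried by periodic-orbit measures, each of zero entropy, so $h_\lambda(g)=0$ by the decomposition formula \eqref{e:4}. If you want a self-contained density argument, that is the construction to emulate: window perturbations that make almost every point periodic, rather than perturbations that make the map rigid.
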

Let us note that Bobok and Troubetzkoy \cite{BT} 
proved that $\{f \in C_{\lambda} : h_{\lambda}(f) = c\}$ is dense in $C_{\lambda}$, but only for each $c \in (0,\infty]$. 
The proof of Theorem \ref{thm1} follows a different path than the proofs of Parry \cite{Pa} or Friedland and Weiss \cite{FrWe}.
By conjugating, we see that the theorem also  holds more generally for $C_\mu$ where $\mu$ is any invariant non-atomic probability measure with full support \cite[Remark 1.1]{BCOT3}.

Adapting various known results and combining them with our results reveals sharp contrasts between the measure-theoretic and topological behavior of generic maps in $\Cl$.
In order to adapt the results of Bruckner and Garg \cite{BrGa},  we  recall a strong notion of nowhere differentiability: a map $f$ is of \emph{nonmonotic type} if  the upper derivative $\overline{D}f(x)=+\infty$ and the lower derivative $\underline{D}f(x)=-\infty$ for every $x$. 

Let $f\in C(I)$. For every $c\in \mathbb{R}$ the set $Lev_f(c) := \{x: f(x) = c\}$ is called a {\em  level set} of $f$. Following \cite[Definition 1.2]{BrGa} the level sets of a function $f\in  C(I)$ are said to be {\em normal} 
if there is a countable set $E_f \subset (\min f,\max f)$ such that the level set $Lev_f(c)$ of $f$ is
\begin{itemize}
\item[(i)] a non-empty perfect set when $c\notin E_f\cup\{\min f,\max f\}$,
\item[(ii)] a single point when $c=\min f$ or $c=\max f$,
\item[(iii)] the union of a nonempty perfect set $P$ with an isolated point $x\notin P$ when $c\in E_f$.
\end{itemize}

\begin{theorem}\label{thma}
The following two assertions hold.
\begin{enumerate}
    \item Generic  maps in $C_{\lambda}$ are of nonmonotonic type.
\item The level sets of  generic maps in $\Cl$ are normal.
\end{enumerate}
\end{theorem}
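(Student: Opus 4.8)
The plan is to transport the classical result of Bruckner and Garg \cite{BrGa} --- who proved exactly assertions (1) and (2) for a residual subset of $C(I)$ with the uniform metric $\rho$ --- into the space $C_\lambda$. The obstruction is that $C_\lambda$ is a meager subset of $C(I)$, so residuality in $C(I)$ is useless here; the saving grace is that $C_\lambda$ is \emph{closed} in $(C(I),\rho)$, because the identity $\int \varphi\circ f\,d\lambda=\int\varphi\,d\lambda$ for all $\varphi\in C(I)$ is preserved under uniform limits, so $(C_\lambda,\rho)$ is itself a Baire space. For each assertion I would write the desired property of $f$ as membership in a countable intersection $\bigcap_n U_n$ of subsets of $C_\lambda$, verify that each $U_n$ is open in $C_\lambda$ by the same compactness arguments as in \cite{BrGa}, and then reduce the statement, via the Baire category theorem in $C_\lambda$, to showing that each $U_n$ is \emph{dense in $C_\lambda$}. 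Openness is soft; the whole difficulty is the density, since it must be witnessed by perturbations that stay inside the rigid class $C_\lambda$.

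For assertion (1), let $U_n\subset C_\lambda$ consist of the maps $f$ such that for every $x\in I$ there are $y,z\in I$ with $0<|y-x|<1/n$, $0<|z-x|<1/n$, $(f(y)-f(x))/(y-x)>n$ and $(f(z)-f(x))/(z-x)<-n$. A routine compactness argument shows $U_n$ is open, and $\bigcap_n U_n$ is precisely the set of maps of nonmonotonic type, i.e.\ those with $\overline{D}f(x)=+\infty$ and $\underline{D}f(x)=-\infty$ for every $x$. To prove $U_n$ dense, fix $f\in C_\lambda$ and $\eps>0$. First approximate $f$ within $\eps/2$ by a piecewise affine map $h\in C_\lambda$ all of whose affine branches have images of length $<\eps/2$ (piecewise affine maps are dense in $C_\lambda$, and refining the domain partition shrinks the branch images). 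The key device is a \emph{mass-neutral zigzag replacement}: on an affine branch $[a,b]\to[c,d]$ replace $h$ by the piecewise affine map $[a,b]\to[c,d]$ built from $2k+1$ monotone legs of equal horizontal width, alternately increasing and decreasing, whose first and last legs are chosen so that the branch endpoint values --- hence continuity with the adjacent branches --- are unchanged. This multiplies all slope magnitudes by $2k+1$; it does not change how that branch pushes $\lambda$ forward, since each target subinterval is now covered by $2k+1$ legs that are $2k+1$ times steeper; and it moves $h$ by at most $d-c<\eps/2$ in sup norm on that branch. Carrying this out simultaneously on every branch with $k$ large produces $g\in C_\lambda$ with $\rho(f,g)<\eps$ such that every point of $I$ --- interior to a leg, at a leg endpoint, at an old partition point, or equal to $0$ or $1$ --- has within distance $1/n$ points realizing chord slopes $>n$ and $<-n$, hence $g\in U_n$. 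Baire's theorem then furnishes a dense $G_\delta$ of nonmonotonic-type maps in $C_\lambda$.

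For assertion (2), I would retain the architecture of \cite{BrGa}. Intersecting with the residual set produced in (1), we may assume $f$ is of nonmonotonic type, so that every level set $Lev_f(c)$ is closed, nowhere dense, and contains no interval; what remains is to force all but countably many level sets to be perfect and each exceptional one to carry exactly one isolated point. Bruckner and Garg realize their residual set as a countable intersection $\bigcap_n \mathcal{O}_n$ of dense open subsets of $C(I)$ whose defining conditions, roughly speaking, assert at resolution $n$ that the level sets are perfect except for finitely many strict local extrema, each attained simply and at a height distinct from the others; since the $\mathcal{O}_n$ are open, $\mathcal{O}_n\cap C_\lambda$ is open in $C_\lambda$, and it suffices to prove it dense there. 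Here I would again use mass-neutral zigzag replacements, now engineering the legs so that the local maxima and minima they create occur at pairwise distinct heights, and distinct from the finitely many critical heights already present at resolution $n$; this is what guarantees that level sets acquire isolated points one at a time and at only countably many heights. Arranging it is the main obstacle: a mass-neutral modification forces the new local extreme values to lie among a very restricted set of heights (the endpoints of the affine branch images), so realizing small, pairwise distinct perturbations of the critical heights requires coordinating the modification across several branches while keeping $f_*\lambda=\lambda$. Once each $\mathcal{O}_n\cap C_\lambda$ has been shown open and dense in $C_\lambda$, the Baire category theorem yields the desired dense $G_\delta$ and completes the proof.
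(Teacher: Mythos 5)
Your overall architecture (Baire category inside the closed subspace $(C_\lambda,\rho)$, dense open sets $U_n$, density witnessed by measure-preserving zigzag perturbations of piecewise affine maps) is the same as the paper's, and the soft parts (closedness of $C_\lambda$, openness of $U_n$, the identification of $\bigcap_n U_n$ with the nonmonotonic-type maps, the fact that the $(2k+1)$-leg equal-width zigzag stays in $C_\lambda$) are all fine. The gap is exactly at the place the paper calls ``the most technical step'': your density argument for (1) fails at the junction between two consecutive affine branches on which $h$ is increasing (and, symmetrically, decreasing), and at $0$ and $1$. Because your zigzag keeps each branch's image $[c,d]$ and its endpoint values unchanged, at such a junction $x$ you get $g(t)\le g(x)$ for all $t$ in the left branch and $g(t)\ge g(x)$ for all $t$ in the right branch, so every chord from $x$ into the two adjacent branches has \emph{nonnegative} slope no matter how large $k$ is; chords to points outside these two branches have slope bounded in absolute value by roughly $2\max|h'|$ (oscillation over a few short-image branches divided by at least one branch width), independently of $k$. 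Hence $\underline{D}g(x)>-\infty$ and $g\notin U_n$ once $n$ exceeds that bound. These junctions are unavoidable in your scheme: to get $\rho(h,g)<\varepsilon$ you must subdivide the laps of $h$ into branches with short images, which creates monotone-through junctions whenever $h$ has a lap of image length $\ge\varepsilon/2$. So the claim ``every point \dots at an old partition point, or equal to $0$ or $1$ \dots realizes chord slopes $>n$ and $<-n$'' is false as stated, and this is not a removable technicality but the actual core difficulty.

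The paper's proof repairs precisely this defect by an extra surgery: after the window (zigzag) perturbation it reflects the graph on small ``$\delta$-marked'' intervals adjacent to each branch junction (and adjusts the endpoint values on the extreme branches), so that immediately on one side of every such junction the map overshoots below (resp.\ above) the junction value by a full $J_i$-height over a length $\le 3\delta$, producing the missing chords of slope $<-n$ (resp.\ $>n$) while condition \eqref{e:3} keeps the map in $C_\lambda$. Some device of this kind is what your argument needs. Your treatment of (2) shares the paper's reliance on the Bruckner--Garg level-set machinery, and you correctly locate the remaining difficulty (forcing the new local extrema to occur at pairwise distinct heights, and the exceptional level sets to have exactly one isolated point, by perturbations constrained to preserve $\lambda$), but you do not resolve it; as written, part (2) is a plan rather than a proof.
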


Clearly there are no continuous invertible interval maps that are nowhere differentiable.  
A map $f \in C_\lambda$ is called {\it invertible a.e.}~if there is an $f$-invariant set
$X \subset I$ of full Lebesgue measure such
that the restriction $f|_X$ is invertible.
Also let 
\begin{equation*}
B_0 := \{x \in I: \#Lev_f(x) > 1\}.
\end{equation*}
The following corollary contrasts the topological and measure-theoretic behavior of generic maps. 

\begin{corollary}\label{cor:ae}
There is a dense $G_\delta$ subset $Q$ of $C_\lambda$ such that:
\begin{enumerate}
    \item\label{cor:ae:1} For each  $f \in Q$ there exists a  set $X_f\subset I$ of Lebesgue measure $1$ such that $f$ is a bijection on $X_f$
and $\lambda(M)=\lambda(M\cap X_f)=\lambda(f^{-1}(M\cap X_f))$ for each Borel set $M\subset I$.

\item\label{cor:ae:2} For each map $f \in Q$ there exists a set $X_f\subset I$ of Lebesgue measure $1$ such that
 \begin{enumerate}
\item the level set $Lev_f(c)$ is a perfect set for each  $c\in (0,1)$, while 
\item for each $c\in I$ the set $X_f\cap\ Lev_f(c)$ contains at most one point.
\end{enumerate}
 
\item\label{cor:ae:3} Except for the two exceptional maps $\id$ and  $1-\id$ in $C_\lambda$ the
set $B_0$ contains an interval,  and thus we always have a set of positive Lebesgue measure on which $f$ is not one-to-one.\footnote{On the first page of the article \cite{BT} there is an incorrect statement that  
any $f \in \Cl$ other than $\id$ and $1-\id$ satisfies $h_{\lambda}(f) > 0$. This need not be the case as exhibited by the results of this article.}
\end{enumerate}
\end{corollary}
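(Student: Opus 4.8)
\emph{Proof proposal.} The plan is to let $Q$ be the intersection of several dense $G_\delta$ subsets of $C_\lambda$ and to verify the three assertions separately. Let $Q_1=\{f\in C_\lambda:h_\lambda(f)=0\}$ (dense $G_\delta$ by Theorem~\ref{thm1}), let $Q_2$ and $Q_3$ be the residual sets of maps of nonmonotonic type and of maps with normal level sets given by Theorem~\ref{thma}, and let $Q_4$ be a residual set of maps whose level sets have no isolated points (i.e.\ with $E_f=\emptyset$). Since $C_\lambda$ is a closed, hence completely metrizable, subspace of $C(I,I)$, Baire's theorem makes $Q:=Q_1\cap Q_2\cap Q_3\cap Q_4$ a dense $G_\delta$ subset of $C_\lambda$.

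Assertion~(1) is the crux, and I would prove it purely ergodic-theoretically from $h_\lambda(f)=0$. Let $\mathcal B$ denote the Borel $\sigma$-algebra of $I$. For every finite partition $\alpha$ one has $h_\lambda(f,\alpha)=H_\lambda\big(\alpha\mid\bigvee_{i\ge 1}f^{-i}\alpha\big)\ge H_\lambda(\alpha\mid f^{-1}\mathcal B)$, so taking the supremum over $\alpha$ gives $0=h_\lambda(f)\ge H_\lambda(\mathcal B\mid f^{-1}\mathcal B)$, whence $f^{-1}\mathcal B=\mathcal B$ modulo $\lambda$. Equivalently, the Rokhlin disintegration $\lambda=\int_0^1\mu_c\,dc$ of $\lambda$ over $f$ (legitimate since $I$ is a Lebesgue space, the partition into level sets is measurable and $f_*\lambda=\lambda$, with $\mu_c$ carried by $Lev_f(c)$) has $\mu_c=\delta_{g(c)}$ a point mass for $\lambda$-a.e.\ $c$, with $g$ measurable and $f(g(c))=c$ a.e. From $\lambda(A)=\int_0^1\mu_c(A)\,dc=\lambda(g^{-1}(A))$ one obtains $g_*\lambda=\lambda$, and then a change of variables gives $g\circ f=\id$ $\lambda$-a.e.\ as well. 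Finally, with $E$ the full-measure set on which both $f\circ g=\id$ and $g\circ f=\id$, and $X_f:=\big(\bigcap_{n\ge 0}f^{-n}E\big)\cap\big(\bigcap_{n\ge 0}g^{-n}E\big)$, one checks directly that $\lambda(X_f)=1$, that $f(X_f)=X_f$, and that $f|_{X_f}$ is a bijection with inverse $g|_{X_f}$. The displayed identities then follow at once: $\lambda(M)=\lambda(M\cap X_f)$ because $\lambda(X_f)=1$, and $\lambda(f^{-1}(M\cap X_f))=\lambda(M\cap X_f)$ because $f$ preserves $\lambda$.

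For assertion~(2) take the same $X_f$. Part~(b) is immediate: if $x,x'\in X_f\cap Lev_f(c)$ then $f(x)=c=f(x')$, so $x=x'$ by injectivity of $f|_{X_f}$. For part~(a), note first that every $f\in C_\lambda$ has $\min f=0$ and $\max f=1$, since $\max f-\min f=\lambda\big(f^{-1}([\min f,\max f])\big)=\lambda(I)=1$; then for $f\in Q_3$ normality gives that $Lev_f(c)$ is a nonempty perfect set for every $c\in(0,1)$ outside the countable set $E_f$, and for $f\in Q_4$ we have $E_f=\emptyset$, so $Lev_f(c)$ is a nonempty perfect set for all $c\in(0,1)$. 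Assertion~(3) needs no genericity: any $f\in C_\lambda$ is nowhere locally constant, since otherwise some point would have a preimage of positive measure; hence a monotone map in $C_\lambda$ is strictly monotone and, being a $\lambda$-preserving homeomorphism of $I$ (so $\lambda(f([0,x]))=x$ for all $x$), must equal $\id$ or $1-\id$. Thus every $f\in C_\lambda\setminus\{\id,1-\id\}$ is non-monotone and attains an interior local extremum, say a local maximum of value $c_0$ at $x_0$; using nowhere local constancy again, $f$ realizes every value of some nondegenerate interval $(c_0-\eta,c_0)$ both to the left and to the right of $x_0$, so $(c_0-\eta,c_0)\subset B_0$, and $f$ restricted to the positive-measure set $f^{-1}((c_0-\eta,c_0))$ is at least two-to-one, hence not injective.

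The entropy inequality $h_\lambda(f)\ge H_\lambda(\mathcal B\mid f^{-1}\mathcal B)$ is the engine that converts the measure-theoretic hypothesis $h_\lambda(f)=0$ into genuine a.e.\ invertibility, and this part is short. The places where I expect real care to be needed are the passage from ``mod~$0$'' invertibility to an honest $f$-invariant full-measure set $X_f$ in assertion~(1) (routine but fiddly), and the residuality in $C_\lambda$ of the condition $E_f=\emptyset$ required for assertion~(2)(a) — equivalently, for maps of nonmonotonic type, the absence of strict local extrema, one-sided crossings of a level already being excluded since they force the secant slopes at the crossing point to have a single sign, contradicting $\overline{D}f=+\infty$ or $\underline{D}f=-\infty$.
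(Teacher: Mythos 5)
Your handling of assertions (1) and (3) is correct, though in both cases you reprove results the paper simply cites: for (1) the paper invokes \cite[Corollary 4.14.3]{Wal82} for ``$h_\lambda(f)=0$ implies $f$ is invertible a.e.'', which is exactly the inequality $h_\lambda(f)\ge H_\lambda(\mathcal B\mid f^{-1}\mathcal B)$ you derive, and your upgrade from mod-$0$ invertibility to an honest $f$-invariant full-measure set $X_f$ is the standard (and, as you say, routine) construction; for (3) the paper cites \cite[Corollary 3.2]{BCOT2}, whereas your direct argument --- a strictly monotone map in $C_\lambda$ must be $\id$ or $1-\id$, and any other map has an interior local extremum whose nearby values are attained on both sides --- is a sound elementary substitute. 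Part (2)(b) also follows from (1) exactly as you argue.

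The genuine gap is in (2)(a), at precisely the point you flagged. Your construction needs $Q_4=\{f:E_f=\emptyset\}$ to be residual, and the justification you sketch is based on a false implication: nonmonotonic type only requires the \emph{two-sided} extreme derivatives to satisfy $\overline{D}f(x_0)=+\infty$ and $\underline{D}f(x_0)=-\infty$, and at a strict local maximum the secant slopes are positive on the left and negative on the right, so both conditions can hold simultaneously (model: $f(x)=f(x_0)-\sqrt{\vert x-x_0\vert}$ near $x_0$). Your argument correctly rules out one-sided crossings, but strict local extrema are exactly the source of the isolated points in case (iii) of normality, and they are not excluded --- on the contrary, in the Bruckner--Garg picture that Theorem~\ref{thma} adapts, the typical function has a countable \emph{nonempty} (indeed dense in the range) set of proper local extremum values, which is why the definition of normal level sets allows $E_f\neq\emptyset$ at all. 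Nothing in the Lebesgue-measure-preserving setting removes these, so $Q_4$ is at best unproven and very likely meager, in which case your $Q=Q_1\cap\cdots\cap Q_4$ fails to be a dense $G_\delta$ and the whole construction collapses. The paper's proof takes $Q$ to be the intersection of the sets from Theorems~\ref{thm1} and~\ref{thma} only, and reads (2)(a) as what normality actually delivers: $Lev_f(c)$ is a nonempty perfect set for every $c\in(0,1)$ outside the countable exceptional set $E_f$, and a singleton at $c=0$ and $c=1$ (using $\min f=0$, $\max f=1$ for $f\in C_\lambda$). You should drop $Q_4$ and state (2)(a) modulo that countable set rather than try to force $E_f=\emptyset$.
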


\begin{proof}
\eqref{cor:ae:1} According to \cite[Corollary 4.14.3]{Wal82} 
$h_{\lambda}(f)=0$ implies that $f$ is invertible a.e.. Thus combining this with Theorems \ref{thm1} and \ref{thma} yields statement \eqref{cor:ae:1}.  Statement \eqref{cor:ae:2}  follows immediately by combining \eqref{cor:ae:1} with 
Theorem  \ref{thma}.  Statement  \eqref{cor:ae:3}  follows from \cite[Corollary 3.2]{BCOT2} that except for the two exceptional maps $\id$ and $1-\id$ the
set $B_0$ contains an interval, and thus we always have a set of positive measure on which $f$ is not one-to-one.
\end{proof}

Unlike the approach given in \cite{Pa} and \cite{FrWe}, it is not clear if one can constructively define the set $X_f$ in Corollary
\ref{cor:ae} for a generic map in $\Cl$.

\begin{remark}
The reverse implication, $f$ is invertible a.e.\ implies that $h_\lambda(f)=0$, is not true in general. It is known that
any compact manifold of dimension $d > 1$ which carries a minimal uniquely ergodic homeomorphism also carries a minimal uniquely ergodic homeomorphism with positive topological entropy \cite{BCL} and the measure can be assumed
to be the Lebesgue measure.
It seems unknown if this reverse implication holds in $C_\lambda$ - compare with \cite[Theorem 1.9.7]{PrUr} and \cite[Corollary 4.18.1]{Wal82}.
\end{remark}

For comparison we recall other generic properties of maps from $C_\lambda$; in our context the item \eqref{item6} in the following theorem is particularly interesting. For the definitions of the notions we will mention in the following theorem we refer the reader to the associated articles.

\begin{theorem*} There is a dense $G_\delta$ subset $G$ of $C_\lambda$ such that for each $f \in G$ 
\begin{enumerate}
    \item   $f$ has infinite topological entropy \cite{BT},
    \item  $f$ is weakly mixing with respect to $\lambda$ \cite{BT},
    \item  $f$ is leo and thus satisfies the specification property \cite{BT},
    \item $f$ has the shadowing property \cite{BCOT1},
    \item periodic points have Cantor set structure, see \cite{BCOT1} for details.
    \item \label{item6} $f$ has $\lambda$-a.e.\ point a knot point \cite{B},
    \item  the graph of $f$ has  Hausdorff dimension and lower box dimension 1 and upper box dimension 2 \cite{BT},\cite{SW95}.
    \item  for every $\delta>0$ there is $n$ such that $f^n$ is $\delta$-crooked.
    \cite{CO}.
\end{enumerate}
\end{theorem*}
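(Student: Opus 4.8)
The plan is to realize $G$ as a countable intersection of dense $G_\delta$ sets, one for each of the eight listed properties, and then to appeal to the Baire category theorem. The essential preliminary observation is that $(C_\lambda,\rho)$ is a Baire space: the measure-preserving constraint $\lambda(f^{-1}(A))=\lambda(A)$ persists under uniform limits, so $C_\lambda$ is a closed subset of the complete metric space $(C(I),\rho)$ and is therefore itself completely metrizable. Consequently a countable intersection of dense $G_\delta$ subsets of $C_\lambda$ is again a dense $G_\delta$ subset, which is exactly the combinatorial engine that drives the argument.

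The second step is to exhibit, for each item, a dense $G_\delta$ set $G_i\subseteq C_\lambda$ of maps with the stated property, quoting the relevant reference. Items (1), (2) and (3) -- infinite topological entropy, weak mixing with respect to $\lambda$, and the locally eventually onto property -- are the generic properties established in \cite{BT}; here the parenthetical ``and thus satisfies the specification property'' is not an additional genericity claim but a pointwise implication, since every leo interval map has specification, so the leo set already suffices. The dimension assertions of item (7) combine the generic graph-dimension estimates for continuous functions of \cite{SW95} with their realization inside $C_\lambda$ in \cite{BT}. Items (4) and (5), shadowing and the Cantor set structure of the set of periodic points, are the generic properties of \cite{BCOT1}; item (6), that $\lambda$-almost every point is a knot point, is the main result of \cite{B}; and item (8), that for each $\delta>0$ some iterate $f^n$ is $\delta$-crooked, is the genericity theorem of \cite{CO}, whose condition is already organized as $\bigcap_{k\ge 1}\bigcup_{n\ge 1}\{f: f^n \text{ is } 1/k\text{-crooked}\}$.

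With $G_1,\dots,G_8$ in hand, the third step is immediate: set $G:=\bigcap_{i=1}^{8}G_i$. Each factor is a dense $G_\delta$ in the Baire space $C_\lambda$, so $G$ is again a dense $G_\delta$, and by construction every $f\in G$ simultaneously satisfies all eight properties. The main obstacle is therefore not the category argument itself but the verification underlying the second step: one must confirm that each cited theorem is genuinely stated, or can be restated without new work, as a dense $G_\delta$ inside the specific subspace $(C_\lambda,\rho)$, rather than inside all of $C(I)$ or merely as a ``generic'' or dense set. The most delicate instance is item (7), where an equality of dimensions is not obviously a $G_\delta$ condition; one must check that $\{f:\dim_H \mathrm{graph}(f)=1\}$, $\{f:\underline{\dim}_B \mathrm{graph}(f)=1\}$ and $\{f:\overline{\dim}_B \mathrm{graph}(f)=2\}$ are each residual in $C_\lambda$, which is precisely the transfer from the $C(I)$ results of \cite{SW95} to the measure-preserving subspace that \cite{BT} must supply.
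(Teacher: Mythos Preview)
Your proposal is correct and matches the paper's own treatment: the paper gives no proof for this theorem at all, simply attaching a citation to each item and relying implicitly on the fact that $C_\lambda$ is a Baire space so that the intersection of the eight dense $G_\delta$ sets is again dense $G_\delta$. Your write-up makes this Baire argument explicit and correctly flags the one point that needs care, namely that each cited result must genuinely yield a dense $G_\delta$ inside $C_\lambda$ rather than merely in $C(I)$ or merely a dense set; the references do deliver this, so there is no gap.
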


The leo and specification properties are actually satisfied not only generically but on an open dense subset of $C_\lambda$ \cite{BCOT3}.

Two other related results were proven by Sigmund. In his first result he considered the full shift
on $\sigma: \R^\Z \to \R^\Z$ and showed that the set of probability measures with zero entropy
form a dense $G_\delta$ in the weak topology.
He also considered  the set of all subshifts with a fixed finite alphabet with respect to the 
Vietoris topology and showed that the generic subshift has zero topological entropy \cite{Si}.

A generalization of Parry's result on genericity of zero entropy measures  
was obtained by Carvalho and Condori \cite{CC}. They
considered uniformly continuous maps $f$ defined over a Polish metric space and showed that the set 
of $f$-invariant measures with zero measure-theoretic entropy is a $G_\delta$ set in the weak topology and that this set is dense and thus generic if the set of $f$-periodic measures is dense in the set of $f$-invariant measures.

Let us give an outline of the article. In Section~\ref{sec:ProofThm1} we prove Theorem~\ref{thm1}. 
In Section~\ref{sec:Examples} we give a construction of Lebesgue measure-preserving maps
that have measure-theoretic entropy $0$ with respect to the Lebesgue measure. In Section~\ref{sec:ProofThm2} we prove Theorem~\ref{thma}, which is the second main result of our article. 
In Section~\ref{sec:planarattractors} we provide an application of our main results.
Part {\it(8)} from the previous theorem implies that inverse limit of any generic map from $C_{\lambda}$ is a curious topological object called the pseudo-arc.
We apply Theorem~\ref{thm1} and Corollary~\ref{cor:ae} to give additional information on some measure-theoretic aspects of the construction of planar homeomorphisms with pseudo-arc attractors from \cite{CO}.

\section{Proof of Theorem~\ref{thm1}}\label{sec:ProofThm1}

Let $\P := \{P_1,\dots,P_k\}$ be a finite partition of $I$ and let
\begin{equation*}
H(\P) := - \sum_{i=1}^k \lambda(P_i) \log \lambda(P_i).
\end{equation*}
For $f \in C_\lambda$ we denote
\begin{equation*}
h_n(f,\P) := \frac{1}{n} H( \bigvee_{i=0}^{n-1} f^{-i} \P).
\end{equation*}
This
function is decreasing in $n$ thus the  limit
$h(f,\P) := \lim_{n \to \infty} h_n(f,\P)$ exists.
Finally we define the {\it entropy} of $f$ by 
\begin{equation*}
h_\lambda(f) := \sup \{h(f,\P): \P \text{ is a
finite partition of } I\}.
\end{equation*}

Let $\P_i$ be the partition of $I$ into dyadic intervals
$[\frac{j}{2^i},\frac{j+1}{2^i}]$ and $\mathcal A_i$ the sub-algebra generated by $\P_i$.
Then $\mathcal A_1\subseteq \mathcal A_2\subseteq\cdots$ and $\bigvee_{n=1}^{\infty}\mathcal
A_n\mathring{=}~\mathcal B$ hence by \cite[Theorem 4.22]{Wal82}
$h_\lambda(f)  = \lim_{i \to \infty} h(f,\P_i).$  In what follows, we denote by $B(f,\delta)$ the  ball in $\Cl$ with the center at $f$ and of radius $\delta$ with respect to the uniform metric.

\begin{lemma}\label{l:1}Fix $f\in C_{\lambda}$ and choose an arbitrary interval $J=[a,b]\subset I$.
Then
\begin{equation*}
\forall~\eps>0~\exists~\delta>0 \text{ such that } \lambda(f^{-1}(J)\bigtriangleup g^{-1}(J))<\eps \text{ for all } g\in
B(f,\delta).
\end{equation*}
\end{lemma}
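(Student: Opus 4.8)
The plan is to show that the symmetric difference $f^{-1}(J)\bigtriangleup g^{-1}(J)$ is contained in the $f$-preimage of a small neighbourhood of the two endpoints of $J$, and then to use the fact that $f$ preserves $\lambda$ to control the measure of that preimage. The heart of the matter is that measure preservation kills the contribution of the level sets $f^{-1}(\{a\})$ and $f^{-1}(\{b\})$ --- each of which has $\lambda$-measure $0$ --- so thickening the target interval by $\delta$ costs only $O(\delta)$ in measure, no matter how intricate the level structure of $f$ is.

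Concretely, I would first record the pointwise implication. Suppose $g\in B(f,\delta)$, so $\rho(f,g)<\delta$, and let $x\in f^{-1}(J)\bigtriangleup g^{-1}(J)$. Then exactly one of $f(x),g(x)$ lies in $[a,b]$ while $|f(x)-g(x)|<\delta$; a short case analysis (according to which of the two points is outside $[a,b]$, and on which side) shows that in every case $f(x)$ lies within distance $\delta$ of $a$ or of $b$. Hence
\[
f^{-1}(J)\bigtriangleup g^{-1}(J)\subseteq f^{-1}\big(([a-\delta,a+\delta]\cup[b-\delta,b+\delta])\cap I\big).
\]

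Since $f\in C_\lambda$ is Lebesgue measure-preserving, $\lambda\big(f^{-1}(A)\big)=\lambda(A)$ for every Borel set $A\subseteq I$, so the right-hand side above has $\lambda$-measure at most $4\delta$. Therefore, given $\eps>0$, it suffices to take $\delta:=\eps/4$ (or any smaller positive number), and then $\lambda(f^{-1}(J)\bigtriangleup g^{-1}(J))<\eps$ for every $g\in B(f,\delta)$.

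I do not expect any serious obstacle. The only point requiring a word of care is the behaviour at the ends of $I$: if $a$ is near $0$ or $b$ near $1$ the $\delta$-neighbourhoods protrude outside $I$, but intersecting with $I$ only decreases the measure, so the bound $4\delta$ stands; the degenerate case $a=b$ is likewise harmless. The case analysis in the second step is elementary and is really the only thing to write out in full.
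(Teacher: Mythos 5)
Your proof is correct, but it takes a different route from the paper's. The paper works from the inside: it sets $H_{\delta}=f^{-1}([a+\delta,b-\delta])$, notes that $H_{\delta}$ is contained in both $f^{-1}(J)$ and $g^{-1}(J)$ while having measure at least $\lambda(J)-\eps/2$, and then uses that \emph{both} $f$ and $g$ preserve $\lambda$ (so that each preimage has measure exactly $\lambda(J)$) to conclude that the symmetric difference has measure less than $\eps$. You work from the outside: you trap the symmetric difference inside $f^{-1}\big(([a-\delta,a+\delta]\cup[b-\delta,b+\delta])\cap I\big)$ and invoke measure preservation only for $f$, getting the bound $4\delta$. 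The case analysis you defer is easily checked (if $f(x)\in(a+\delta,b-\delta)$ then $g(x)\in J$ as well, and if $f(x)<a-\delta$ or $f(x)>b+\delta$ then $g(x)\notin J$ either, so a point of the symmetric difference must have $f(x)$ within $\delta$ of an endpoint). Your version is marginally more general, since it never uses that $g\in C_{\lambda}$ --- only that $\rho(f,g)<\delta$ --- whereas the paper's argument genuinely needs $\lambda(g^{-1}(J))=\lambda(J)$. The only cosmetic point is that with $\delta=\eps/4$ you get $\le\eps$ rather than $<\eps$, so one should take $\delta<\eps/4$, as you already note parenthetically; the paper likewise takes $\delta<\eps/4$ strictly.
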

\begin{proof}Fix $\eps>0$ and let $\delta<\frac{\eps}{4}$. If we put
$H_{\delta}=f^{-1}([a+\delta,b-\delta])$, since $f\in C_{\lambda}$,
\begin{equation}\label{e:2}
\lambda(f^{-1}(J))=\lambda(J)>\lambda(H_{\delta})>\lambda(J)-\frac{\eps}{2}.\end{equation}
For each $g\in B(f,\delta)$ we can write analogously
\begin{equation}\label{e:3'}
\lambda(g^{-1}(J))=\lambda(J)>\lambda(H_{\delta})>\lambda(J)-\frac{\eps}{2}.\end{equation}
Clearly $H_{\delta}\subset (f^{-1}(J)\cap g^{-1}(J))$, so (\ref{e:2}) and
(\ref{e:3'}) imply
\begin{equation*}\lambda(f^{-1}(J)\bigtriangleup g^{-1}(J))<\eps. \qedhere
\end{equation*}
\end{proof}

\begin{lemma}\label{l:2}For $\ell\in\N$ let positive numbers $a_1,\dots,a_{\ell}$ satisfy
$\sum_{i=1}^{\ell}a_i=\eta<1$. Then
\begin{equation*}
-\sum_{j=1}^{\ell}a_j\log a_j\le \eta\log\ell - \eta\log\eta.
\end{equation*}
\end{lemma}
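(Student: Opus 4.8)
The inequality is just the elementary ``entropy $\le \log(\text{number of states})$'' bound, rescaled from probability vectors to vectors of total mass $\eta$. The plan is as follows. First normalize: set $b_j := a_j/\eta$ for $j=1,\dots,\ell$, so that $b_j>0$ and $\sum_{j=1}^{\ell} b_j = 1$, i.e.\ $(b_1,\dots,b_\ell)$ is a probability vector. Then expand
\begin{equation*}
-\sum_{j=1}^{\ell} a_j\log a_j
= -\sum_{j=1}^{\ell} \eta b_j\bigl(\log\eta + \log b_j\bigr)
= -\eta\log\eta\sum_{j=1}^{\ell} b_j \;-\; \eta\sum_{j=1}^{\ell} b_j\log b_j
= -\eta\log\eta \;-\; \eta\sum_{j=1}^{\ell} b_j\log b_j,
\end{equation*}
using $\sum_j b_j = 1$. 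Since $\eta>0$, it therefore suffices to prove the classical bound $-\sum_{j=1}^{\ell} b_j\log b_j \le \log\ell$ for any probability vector $(b_1,\dots,b_\ell)$.

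For that last step I would invoke concavity of the logarithm via Jensen's inequality: treating $(b_1,\dots,b_\ell)$ as weights,
\begin{equation*}
-\sum_{j=1}^{\ell} b_j\log b_j = \sum_{j=1}^{\ell} b_j\log\frac{1}{b_j} \le \log\Bigl(\sum_{j=1}^{\ell} b_j\cdot\frac{1}{b_j}\Bigr) = \log\ell.
\end{equation*}
(Equivalently one can use Gibbs' inequality comparing $(b_j)$ with the uniform distribution $(1/\ell)$.) Substituting this into the displayed identity gives
\begin{equation*}
-\sum_{j=1}^{\ell} a_j\log a_j \le -\eta\log\eta + \eta\log\ell = \eta\log\ell - \eta\log\eta,
\end{equation*}
which is the claim.

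\textbf{Main obstacle.} There is essentially no obstacle here: the only nontrivial ingredient is the standard entropy inequality $-\sum b_j\log b_j\le\log\ell$, which is a one-line consequence of Jensen's inequality for the concave function $\log$. The only points requiring a little care are bookkeeping ones: that the hypothesis $\eta<1$ is not actually needed for this inequality (it merely ensures $-\eta\log\eta\ge 0$, which is where it is used later in the application), and that all $a_j$ are assumed strictly positive so that the terms $a_j\log a_j$ and the ratios $a_j/\eta$ are well defined; if one wishes to allow $a_j=0$, the convention $0\log 0 = 0$ makes the statement persist with the same proof after discarding the vanishing coordinates.
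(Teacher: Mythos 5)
Your proof is correct and follows essentially the same route as the paper: normalize to the probability vector $b_j=a_j/\eta$ and apply the standard bound $-\sum_j b_j\log b_j\le\log\ell$, which the paper simply cites from Walters while you derive it via Jensen's inequality. The bookkeeping identity relating the two sums is identical in both arguments.
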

\begin{proof}
Let $b_i := a_i/\eta$, then $\sum_{i=1}^{\ell} b_i = 1$ and thus $-\sum_{i=1}^{\ell} b_i \log b_i \le \log \ell$
(\cite[Cor 4.2.1]{Wal82})
which is equivalent to the announced inequality.
\end{proof}
\begin{proposition}\label{p:1}Let $\P=\{P_1,\dots,P_k\}$ be a partition  of $I$ consisting of
intervals and $n\in\N$. The map 
\begin{equation*}
f\longrightarrow h_n(f,\P),~f\in C_{\lambda}
\end{equation*}
is continuous.
\end{proposition}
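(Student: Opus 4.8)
The plan is to expand the entropy of the $n$-fold join explicitly and reduce the statement to a continuity property of the Lebesgue measures of finitely many sets. The atoms of $\bigvee_{i=0}^{n-1}f^{-i}\P$ are the sets $A_{\vec j}(f):=\bigcap_{i=0}^{n-1}f^{-i}(P_{j_i})$, indexed by $\vec j=(j_0,\dots,j_{n-1})\in\{1,\dots,k\}^n$, so that
\[
H\Bigl(\bigvee_{i=0}^{n-1}f^{-i}\P\Bigr)=\sum_{\vec j}\varphi\bigl(\lambda(A_{\vec j}(f))\bigr),\qquad \varphi(t):=-t\log t\ \ (\varphi(0):=0).
\]
Since $\varphi$ is continuous, hence uniformly continuous, on $[0,1]$, and the index set is finite, it suffices to show that $f\mapsto\lambda(A_{\vec j}(f))$ is continuous for each fixed $\vec j$. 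Using $|\lambda(A)-\lambda(B)|\le\lambda(A\triangle B)$ together with $\bigl(\bigcap_iA_i\bigr)\triangle\bigl(\bigcap_iB_i\bigr)\subseteq\bigcup_i(A_i\triangle B_i)$ one gets
\[
\bigl|\lambda(A_{\vec j}(f))-\lambda(A_{\vec j}(g))\bigr|\le\sum_{i=0}^{n-1}\lambda\bigl(f^{-i}(P_{j_i})\triangle g^{-i}(P_{j_i})\bigr),
\]
so the whole proposition reduces to the claim that, for every interval $J\subseteq I$ and every $i\ge0$, $\lambda\bigl(f^{-i}(J)\triangle g^{-i}(J)\bigr)\to0$ as $g\to f$ in the uniform metric.

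I would prove this claim by induction on $i$. For $i=0$ the symmetric difference is empty and for $i=1$ it is precisely Lemma~\ref{l:1}. For the inductive step, set $B:=f^{-(i-1)}(J)$ and insert the intermediate set $g^{-1}(B)$:
\[
f^{-i}(J)\triangle g^{-i}(J)\subseteq\bigl(f^{-1}(B)\triangle g^{-1}(B)\bigr)\cup g^{-1}\bigl(B\triangle g^{-(i-1)}(J)\bigr).
\]
Because $g$ preserves $\lambda$, the measure of the second piece equals $\lambda\bigl(f^{-(i-1)}(J)\triangle g^{-(i-1)}(J)\bigr)$, which is small for $g$ near $f$ by the inductive hypothesis. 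For the first piece, given $\eta>0$ pick a finite union of intervals $U=\bigcup_\ell J_\ell$ with $\lambda(B\triangle U)<\eta$; then, using that both $f$ and $g$ preserve $\lambda$,
\[
\lambda\bigl(f^{-1}(B)\triangle g^{-1}(B)\bigr)\le\lambda(B\triangle U)+\lambda\bigl(f^{-1}(U)\triangle g^{-1}(U)\bigr)+\lambda(U\triangle B)<2\eta+\sum_{\ell}\lambda\bigl(f^{-1}(J_\ell)\triangle g^{-1}(J_\ell)\bigr),
\]
and by Lemma~\ref{l:1} applied to each $J_\ell$ the last sum is $<\eta$ once $g$ lies in a sufficiently small ball $B(f,\delta)$. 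Hence the first piece is $<3\eta$, and since $\eta$ was arbitrary the induction goes through.

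The only genuine obstacle is this inductive step: Lemma~\ref{l:1} is available only for intervals, whereas $f^{-(i-1)}(J)$ need not be an interval, so the argument must route through an approximation by finite unions of intervals while peeling off one copy of $g^{-1}$ at a time and using measure preservation to absorb the resulting error terms. (Lemma~\ref{l:2} plays no role in this proof.) Combining the two reductions of the first paragraph with the claim yields continuity of $f\mapsto\lambda(A_{\vec j}(f))$ for every $\vec j$, and therefore continuity of $f\mapsto h_n(f,\P)=\tfrac1n H\bigl(\bigvee_{i=0}^{n-1}f^{-i}\P\bigr)$.
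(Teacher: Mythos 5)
Your proof is correct. At the top level it shares the paper's strategy --- reduce continuity of $h_n(\cdot,\P)$ to continuity of $f\mapsto\lambda(A_{\vec j}(f))$ for the atoms of the join, with Lemma~\ref{l:1} as the engine --- but both technical sub-steps are executed differently. First, to control iterated preimages the paper applies Lemma~\ref{l:1} directly to the maps $f^0,f^1,\dots,f^{n-1}\in C_\lambda$, which implicitly uses the (standard, but unstated) fact that $g^i$ is uniformly close to $f^i$ once $g$ is uniformly close to $f$; you instead peel off one preimage at a time by induction, and you pay for the fact that $f^{-(i-1)}(J)$ is no longer an interval by approximating it with a finite union of intervals and using measure preservation of $f$ and $g$ to absorb the resulting errors. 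Both routes are sound; the paper's is shorter, while yours is more self-contained and makes explicit exactly where each hypothesis ($f,g\in C_\lambda$, $J$ an interval) is used. Second, for the atoms that have measure zero for $f$ but positive measure for nearby $g$, the paper groups them, lets $\eta$ be their total mass, and bounds their entropy contribution by $\eta\log(k^n-m)-\eta\log\eta$ via Lemma~\ref{l:2}; you simply sum $\varphi(t)=-t\log t$ over all $k^n$ multi-indices and invoke continuity of $\varphi$ at $0$, which makes Lemma~\ref{l:2} unnecessary and is arguably cleaner. I see no gaps in your argument.
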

\begin{proof}Fix $\eps>0$ and $f\in C_{\lambda}$. Put
$\bigvee_{i=0}^{n-1}f^{-i}(\P)=\{A_1,\dots,A_m\}$, where $\lambda(A_j)>0$ for each
$j\in\{1,\dots,m\}$ and $\sum_{j=1}^m\lambda(A_j)=1$. 
Each set $A_j$ can be written as
\begin{equation*}
A_j=P_{i_0(j)}\cap f^{-1}(P_{i_1(j)})\cap\cdots\cap
f^{-(n-1)}(P_{i_{n-1}(j)}),~j\in\{1,\dots,m\},
\end{equation*}
and all other $(k^n-m)$ possible intersections have measure 0.

For $g$ close to $f$ we consider $\bigvee_{i=0}^{n-1} g^{-i} (\P)$ and let $\ell_g$ be the number
of positive measure elements of this partition. Enumerate the sets $B_{j,g}$ in $\bigvee_{i=0}^{n-1} g^{-i} (\P)$ in such a way that 
\begin{equation*}
B_{j,g}=P_{i_0(j)}\cap g^{-1}(P_{i_1(j)})\cap\cdots\cap
g^{-(n-1)}(P_{i_{n-1}(j)}),~j\in\{1,\dots,m\}.
\end{equation*}
It is clear that $B_{j,f} = A_j$.
Furthermore these sets vary continuously with respect to $g$ sufficiently close to $f$ in the
following sense.
Applying Lemma \ref{l:1} to the maps
$f^0,f^1,\dots,f^{n-1}\in C_{\lambda}$  
we can choose sufficiently small $\delta > 0$ so that each $B_{j,g}$ has positive measure provided that $A_j$ has positive measure for $1\leq j\leq m$. Hence $\ell_g \ge m$, we may assume that $\lambda(B_{j,g})>0$ for $1 \leq j\leq \ell_g$ and  
furthermore (as a consequence of Lemma~\ref{l:1}; see also Lemma~\ref{l:2}).

\begin{equation*}
-\sum_{j=1}^m\lambda(A_j)\log\lambda(A_j)+\sum_{j=1}^m\lambda(B_{j,g})\log\lambda(B_{j,g})<\frac{n\eps}{2},
\end{equation*}
 and
$\sum_{j=m+1}^{\ell_g}\lambda(B_{j,g})=1-\sum_{j=1}^{m}\lambda(B_{j,g})=\eta$ for which
\begin{equation*}
\eta\log(k^n-m)-\eta\log\eta<\frac{n\eps}{2}
\end{equation*}
for each $g\in
B(f,\delta)$. Since $\ell_g\le k^n$, from Lemma
\ref{l:2} we obtain
\begin{equation}\label{e:8}0<-\sum_{j=m+1}^{\ell_g}\lambda(B_{j,g})\log\lambda(B_{j,g})\le
\eta\log(k^n-m) - \eta\log\eta<\frac{n\eps}{2}.
\end{equation}
From  (\ref{e:8}) we obtain
\begin{equation*}
\left \vert
h_n(f,\P)-h_n(g,\P) \right \vert=\left \vert\frac{1}{n}H(\bigvee_{i=0}^{n-1}f^{-i}(\P))-\frac{1}{n}H(\bigvee_{i=0}^{n-1}g^{-i}(\P)) \right \vert<\eps.
\end{equation*}
\end{proof}

\begin{proof}[Proof of Theorem \ref{thm1}]
Fix $\beta > 0$ and let 
\begin{equation*}
Q_{\beta} := \bigcap_{k \ge 1} \bigcup_{i \ge k} \bigcup_{n \ge k} \left \{f \in C_\lambda:
h_n(f,\P_i) < \beta \right \}.
\end{equation*}

We claim that 
\begin{itemize}
\item[(i)] $Q_{\beta}$ a dense $G_\delta$ set and  
\item[(ii)] $Q_{\beta}$ coincides with the
set of maps
with entropy less than $\beta$.
\end{itemize}
Once the claim is established the theorem follows since $Q := \bigcap_{n \ge 1} Q_{1/n}$
is a dense $G_\delta$ set and it coincides with the set of zero entropy maps.

We turn to the proof of the claim. Let us first prove (ii).
If $h_\lambda(f) < \beta$ then $h_n(f,\P_i) < \beta$ for each $i$ for all sufficiently large $n$,
so $f \in Q_{\beta}$. Conversely  to see that the entropy of the maps in $Q_{\beta}$ is
at most $\beta$ we
fix $f \in Q_{\beta}$. By the definition of $Q_{\beta}$, for each $f \in Q_{\beta}$ there are sequences
$i_k \ge k$ and $n_k \ge k$ such that
$h_{n_k} (f,\P_{i_k}) < \beta$ for each $k \ge 1$.
Since $h_n(f,\P)$ is decreasing in $n$ we  conclude $h(f,\P_{i_k}) < \beta$.

Since $\P_{i+1}$ refines $\P_i$ we have $h(f,\P_i)$ is monotonic in $i$ and we conclude $h(f,\P_i) < \beta$
for all $i \geq i_1$. Thus $h_\lambda(f)  = \lim_i h(f,\P_i) < \beta$ which finishes the proof of (ii).

To prove (i) we remark that the set $\{f \in C_\lambda: h_n(f,\P_i) < \beta \}$ is open by
Proposition \ref{p:1}, thus $Q_{\beta}$ is a $G_\delta$ set.

The set $Q_{\beta}$ coincides with the set of maps with entropy less than
$\beta$, thus its density follows from  \cite{BT} Proposition 24. This
finishes the proof of the Theorem.
 \end{proof}

 \section{Constructions of zero entropy maps}\label{sec:Examples}

Suppose $f\in C_\lambda$ be such that $\lambda(\Per(f))=1$.
If we use the ergodic decomposition theorem \cite[p. 153]{Wal82} then
\begin{equation}\label{e:4}
h_{\lambda}(f)=
\int_{E(I,f)}h_{\mu}(f)~d\nu(\mu)
\end{equation}
where $E(I,f)$ is the set of all ergodic measures invariant for $f$ and $\nu$ is a Borel measure on $E(I,f)$ satisfying 
\begin{equation*}
\lambda=\int_{E(I,f)}\mu~ d\nu(\mu).
\end{equation*}
Since $\lambda(\Per(f))=1$ necessarily $\nu$ a.e.\ measure ergodic $\mu$ is a CO-measure (i.e., a measure on a periodic orbit).
In \cite[Theorem 2]{BCOT1} we constructed a dense set of leo maps for which the periodic points have full measure,
and thus they all have zero entropy with respect to Lebesgue measure.
This yields an alternative proof of the density of  $Q$ from the proof of Theorem~\ref{thm1}.

\begin{remark} Parry sketched a construction of maps $f \in C_\lambda$ which are  piecewise monotone with $2$ full branches which are invertible $\lambda$-a.e.\ \cite{Pa}; a more detailed version of this construction (with $k$ full branches) was given by Friedland and Weiss \cite{FrWe}.  
\end{remark}

\section{Proof of Theorem~\ref{thma}}\label{sec:ProofThm2}

Let $C(I)$ denote the set of continuous maps $f: I \to I$.
 A function $f\in C(I)$ is said to be {\em non-decreasing at a point $x\in I$} if there exists a $\delta>0$ such that $f(t)\leq f(x)$ when  $t\in I\cap (x-\delta,x)$ and $f(t)\geq f(x)$  when $t\in I\cap (x,x-\delta)$. The function $f$ is called {\em nonincreasing  at $x$} if $-f$ is nondecreasing at $x$, and $f$ is called {\em monotone at $x$} if it is either nondecreasing or nonincreasing at $x$. If $f\in C(I)$ and $\gamma\in \mathbb{R}$ then we define the map $f_{-\gamma}$ by:
\begin{equation}\label{e:f-gamma}
f_{-\gamma}(x) :=f(x)-\gamma x \text{ for } x\in I.
\end{equation}
A map $f$ is {\em monotonic type at $x$} if there exists a real number $\gamma$ such that the function $f_{-\gamma}$ is monotone at $x$. If  $f$ is not of monotonic type at any point of $I$, it will be said to be of {\em nonmonotonic type}. It is easy to see that $f$ is of nonmonotonic type if and only if for each $x$, $\overline{D}f(x)=+\infty$ and $\underline{D}f(x)=-\infty$, where $\overline{D}f$, $\underline{D}f$ are the upper and lower derivatives.

 The proof of Theorem \ref{thma} requires two auxiliary lemmas.
 The first lemma is a minor modification of Lemma 11 from \cite{BCOT4}.

\begin{lemma}
Let $f$ be a piecewise affine continuous interval map with nonzero slopes and such that its derivative exists everywhere but on a finite set $E$. 
Then $f\in C_{\lambda}$ if and only if 
\begin{equation}\label{e:3}
   \forall~y\in [0,1)\setminus F(E)\colon~\sum_{x\in F^{-1}(y)}\frac{1}{\vert F'(x)\vert}=1.
 \end{equation}
 \end{lemma}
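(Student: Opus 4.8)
The plan is to reduce membership in $C_\lambda$ to the one-variable statement that
\[
\phi(y):=\lambda\big(f^{-1}([0,y])\big),\qquad y\in[0,1],
\]
is the identity function, and then to obtain \eqref{e:3} by differentiating that identity. Since the intervals $[0,y]$ form a $\pi$-system that generates the Borel $\sigma$-algebra of $I$ and contains $I$, and since $\lambda\circ f^{-1}$ is a Borel probability measure, a continuous map $f\colon I\to I$ lies in $C_\lambda$ if and only if $\phi(y)=y$ for every $y\in[0,1]$. I also record that, because $f$ is piecewise affine with nonzero slopes, $f^{-1}(y)$ is finite for every $y$ --- so the sum in \eqref{e:3} is a finite sum --- and $f^{-1}(\{y\})$ has Lebesgue measure zero.

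The next step is to pin down the shape of $\phi$. Write $0=t_0<t_1<\cdots<t_r=1$ for the breakpoints of $f$, so that these points together with the endpoints make up $E$ and on each $[t_{j-1},t_j]$ the map $f$ is affine with slope $s_j\neq 0$. For a single affine branch $b=f|_{[t_{j-1},t_j]}$, an elementary computation shows that $y\mapsto\lambda\big(\{x\in[t_{j-1},t_j]:f(x)\le y\}\big)$ is continuous, equals $0$ below the minimum value of $b$, equals $t_j-t_{j-1}$ above the maximum value of $b$, and is affine with slope $1/|s_j|$ in between. Summing these $r$ branch contributions shows that $\phi$ is continuous and piecewise affine with all of its corners contained in the finite set $F(E)$, that $\phi(0)=0$ and $\phi(1)=1$, and --- this is the key point --- that for every $y\in[0,1)\setminus F(E)$ no point of $f^{-1}(y)$ is a breakpoint, so each branch whose image interval contains $y$ in its interior contributes exactly $1/|s_j|=1/|F'(x)|$ with $x$ its unique preimage of $y$, whence $\phi$ is differentiable at $y$ with
\[
\phi'(y)=\sum_{x\in F^{-1}(y)}\frac{1}{|F'(x)|}.
\]

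Both implications follow at once. If \eqref{e:3} holds, then $\phi'\equiv 1$ off the finite set $F(E)$; since $\phi$ is continuous with $\phi(0)=0$, integration yields $\phi(y)=y$ for all $y$, so $f\in C_\lambda$. Conversely, if $f\in C_\lambda$ then $\phi(y)=y$, hence $\phi'(y)=1$ wherever $\phi$ is differentiable, in particular at every $y\in[0,1)\setminus F(E)$; by the displayed formula this is precisely \eqref{e:3}.

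The one delicate point --- the place where this is more than a one-line argument --- is the bookkeeping behind the pointwise (rather than merely almost-everywhere) derivative formula for $\phi$: one must check that the non-differentiability points of the individual branch contributions all lie in $F(E)$, which is where the images of the two domain endpoints enter (so one uses the natural reading that $0,1\in E$), and one must keep track of the one-sided slopes at $0$ and $1$. This is routine; since the statement is a minor modification of \cite[Lemma~11]{BCOT4}, I would present the branch computation tersely and cite that lemma for the remaining verifications.
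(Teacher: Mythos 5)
Your argument is correct. Note first that the paper does not actually prove this lemma: it is stated as ``a minor modification of Lemma 11 from \cite{BCOT4}'' and used as a black box, so there is no in-paper proof to compare against; the cited lemma is itself established by essentially the computation you give. Your route --- reducing $f\in C_\lambda$ to $\phi(y):=\lambda(f^{-1}([0,y]))=y$ via the $\pi$-system of intervals $[0,y]$, computing each affine branch's contribution to $\phi$ as a continuous piecewise affine function with slope $1/|s_j|$ on the interior of the branch's image and corners only at values in $F(E)$, and then reading \eqref{e:3} off as $\phi'\equiv 1$ --- is the standard and correct way to prove this, and both implications do follow as you say. The two points you flag are genuine and you resolve them appropriately: the statement conflates $F$ with $f$ (a notational slip in the paper), and the domain endpoints must be counted into $E$ for the pointwise derivative formula to hold off $F(E)$ (otherwise $\phi$ could a priori have a corner at $f(0)$ or $f(1)$ not excluded by the hypothesis); with the reading $0,1\in E$ the bookkeeping closes. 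So this is a complete, self-contained proof of a statement the paper only cites.
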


\begin{figure}
	\begin{tikzpicture}[scale=5]
	\draw(0,0)--(0,1)--(1,1)--(1,0)--(0,0);
	\draw[dotted,blue] (0,1/3)--(1,1/3);
	\draw[dotted,blue] (0,2/3)--(1,2/3);
	\draw[dotted,blue] (0,5/6)--(1,5/6);
	\draw[dotted,blue] (0,1/6)--(1,1/6);
	\draw (0,1/6)--(1/18,1/3)--(1/18+1/15,2/3)--(1/18+1/15+1/24,5/6)--(1/18+1/15+1/24+1/12-1/19,1)--(1/18+1/15+1/24+1/12+1/12,5/6)--(1/18+1/15+1/24+1/12+1/12+1/24,2/3)--(1/18+1/15+1/24+1/12+1/12+1/24+1/15,1/3)--(1/18+1/15+1/24+1/12+1/12+1/24+1/18+1/15,1/6)--(1/18+1/15+1/24+1/12+1/12+1/24+1/18+1/15+1/12+1/14,0)--(1/18+1/15+1/24+1/12+1/12+1/24+1/18+1/15+1/12+1/12,1/6)--(1/18+1/15+1/24+1/12+1/12+1/24+1/18+1/15+1/12+1/12+1/18,1/3)--(1/18+1/15+1/24+1/12+1/12+1/24+1/18+1/15+1/12+1/12+1/18+1/15,2/3)--(1/18+1/15+1/24+1/12+1/12+1/24+1/18+1/15+1/12+1/12+1/18+1/15+1/24+1/40,5/6)--(1/18+1/15+1/24+1/12+1/12+1/24+1/18+1/15+1/12+1/12+1/18+1/15+1/24+1/24,2/3)--(1/18+1/15+1/24+1/12+1/12+1/24+1/18+1/15+1/12+1/12+1/18+1/15+1/24+1/12+1/15,1/3)--(1/18+1/15+1/24+1/12+1/12+1/24+1/18+1/15+1/12+1/12+1/18+1/15+1/24+1/24+2/15,2/3);
	\node[blue] at (-0.05,1/6){\tiny $c_1$};
	\node[blue] at (-0.05,1/3){\tiny $c_2$};
	\node[blue] at (-0.05,2/3){\tiny $c_3$};
	\node[blue] at (-0.05,5/6){\tiny $c_4$};
	\node[blue] at (0, -0.08){\tiny $0$};
	\node[blue] at (1, -0.08){\tiny $1$};
	\node[blue] at (1/2+1/16,1/2){\tiny $f_n\in C_{\lambda}$};
	\end{tikzpicture}\nolinebreak
	\hspace{1cm}
	\begin{tikzpicture}[scale=5]
	\node[blue] at (3/4,1/2){\tiny $g_{n,i}\in C_{\lambda}$};
	\draw(0,0)--(0,1)--(1,1)--(1,0)--(0,0);
	\draw[dotted,blue] (0,3/8)--(3/8,3/8);
        \draw[dotted,blue] (3/8-1/16,0)--(3/8-1/16,1/4);
	\draw[dotted,blue] (0,1/4)--(3/8,1/4);
	\draw[dashed,blue] (1/4,1/4)--(1/4,3/8)--(3/8,3/8)--(3/8,1/4)--(1/4,1/4);
	\draw (0,0)--(1/4,1/4);
	\draw (3/8,3/8)--(1,1);
	\draw(1/4,1/4)--(1/4+1/32,3/8)--(1/4+1/16,1/4)--(3/8,3/8);
	\draw[blue,dotted] (1/4,1/4)--(3/8,3/8);
	\node[blue] at (-0.05,3/8){\tiny $\frac{k+1}{n}$};
	\node[blue] at (-0.05,1/4){\tiny $\frac{k}{n}$};
	\node[blue] at (1/4,-0.05){\tiny $\frac{k}{n}$};
	\node[blue] at (3/8+1/20,-0.05){\tiny $\frac{k+1}{n}$};
	\node[blue] at (5/16,-0.05){\tiny $c_i$};
	\draw[dotted,blue] (3/8,0)--(3/8,3/8);
	\draw[dotted,blue] (1/4,0)--(1/4,1/4);
	\draw[dotted,blue] (3/8,0)--(3/8,1/4);
	\end{tikzpicture}
 \caption{Removing irrational critical values.}\label{fig1}
\end{figure}
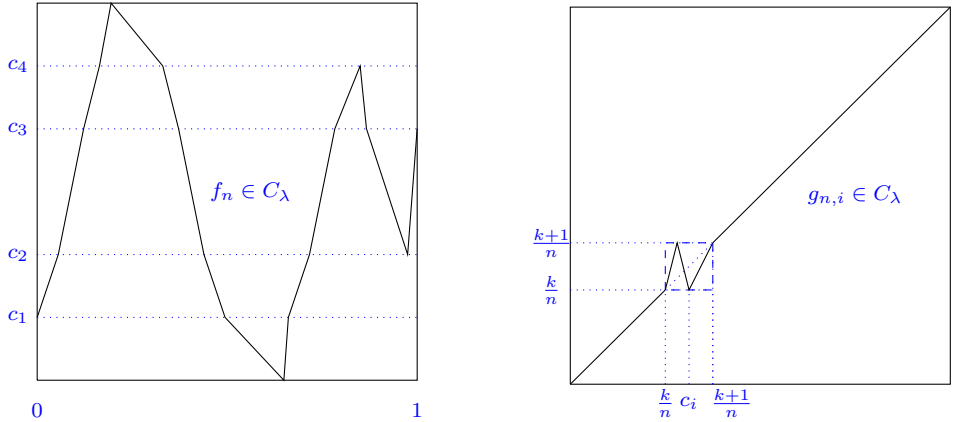

A {\em critical value} of a map $f$ is the value of $f$ at a local maximum or a local minimum. A {\em determining 
value} of a piecewise affine map $f$ is either one of its critical values or its value at a point in which 
the derivative of $f$ does not exist. 

\begin{lemma}\label{crv}
The set of piecewise affine maps with rational determining values is dense in $C_{\lambda}$.
 \end{lemma}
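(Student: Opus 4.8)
\textbf{Proof proposal for Lemma~\ref{crv}.}

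The plan is to start from an arbitrary $f \in C_\lambda$ and an arbitrary $\eps > 0$, and produce a piecewise affine map $g \in C_\lambda$ with $\rho(f,g) < \eps$ all of whose determining values (critical values together with values at non-differentiability points) are rational. The construction proceeds in two stages. In the first stage I would invoke the density in $C_\lambda$ of piecewise affine Lebesgue-measure-preserving maps (this is already implicit in the preceding material, e.g. via the lemma characterizing piecewise affine maps in $C_\lambda$ through \eqref{e:3}, and is used repeatedly in \cite{BCOT1,BCOT4}); so without loss of generality we may assume $f$ itself is piecewise affine with nonzero slopes, finitely many break points, and lies within $\eps/2$ of the original map. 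The only obstruction to being ``nice'' is then that some of the finitely many determining values $c_1 < c_2 < \dots < c_r$ of $f$ may be irrational. The goal is to perturb $f$ by less than $\eps/2$, staying in $C_\lambda$ and keeping the map piecewise affine, so that these determining values get pushed to rational numbers.

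The second stage is the actual surgery, illustrated in Figure~\ref{fig1}. Fix $n$ large enough that $1/n < \eps/2$ and consider the horizontal grid lines at heights $k/n$. Refine the piecewise affine structure of $f$ by adding all preimages of the grid heights as break points; now on each maximal closed interval of monotonicity of $f$, the graph crosses between consecutive grid lines $k/n$ and $(k+1)/n$ along affine pieces. The idea (right-hand picture in Figure~\ref{fig1}) is to replace each affine monotone arc that climbs from level $k/n$ to level $(k+1)/n$ by a small ``tent-like'' detour that reaches a rational target value $c_i$ near the original irrational critical value but stays inside the horizontal strip $[k/n,(k+1)/n]$, and symmetrically for descending arcs and for the points where $f'$ fails to exist. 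Because the detour is confined to a strip of height $1/n < \eps/2$, the uniform distance moved is at most $1/n < \eps/2$, so $\rho(f,g) < \eps$. The new break points and new local extrema all occur at heights that are either old grid heights $k/n$ (rational) or the chosen rational values $c_i$; hence every determining value of $g$ is rational. To keep $g \in C_\lambda$, each local modification must be done so that the slopes of the inserted pieces satisfy the balancing condition \eqref{e:3}: on each strip $[k/n,(k+1)/n]$ the sum of reciprocals of absolute slopes over all preimages of a generic height must remain $1$. Since we are free to choose the widths of the inserted tents, this is a finite system of linear constraints on positive parameters that is easily solved — e.g. one can insert each detour using two affine pieces whose base width we shrink to compensate, distributing the small ``defect'' in measure created by one strip into an adjacent modification so that the global constraint is preserved.

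The main obstacle is bookkeeping the measure-preservation constraint \eqref{e:3} while doing many simultaneous local surgeries: changing the graph on one monotone arc inside a strip alters, for heights in that strip, the list of slopes $|F'(x)|$ over the fiber $F^{-1}(y)$, and one must ensure the reciprocal-slope sum stays exactly $1$ for \emph{every} generic $y$, not just on average. The clean way to handle this is to localize: perform the modification on each arc \emph{within} a single grid strip $[k/n,(k+1)/n]$ and arrange that the modified arc still maps onto exactly that strip and contributes the same total preimage length there as before (so that $\lambda(g^{-1}([k/n,(k+1)/n])) = 1/n$ is automatic), then verify \eqref{e:3} strip by strip — inside one strip the fiber structure is the disjoint union of the contributions of the finitely many arcs, each of which we have designed to carry the right reciprocal-slope mass. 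A further small point to check is that after inserting the tents the slopes remain nonzero and finite and the map remains continuous with matching endpoint values on adjacent pieces; both are ensured by choosing the tent at the junction of two arcs to agree in value with $f$ at that junction (which, after the refinement, is a grid height $k/n$, hence already rational and needs no perturbation there). Once these local pieces are assembled, $g$ is piecewise affine, $\eps$-close to $f$, lies in $C_\lambda$, and has only rational determining values, proving density.
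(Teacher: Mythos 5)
Your overall strategy matches the paper's: first reduce to a dense family of piecewise affine maps in $C_\lambda$, then localize to horizontal strips $[k/n,(k+1)/n]$ of height $1/n<\eps/2$ containing at most one determining value each, and repair the irrational determining values one strip at a time. The difference, and the place where your write-up has a genuine gap, is the mechanism for staying in $C_\lambda$. You propose direct surgery on the graph followed by a by-hand verification of the fiberwise condition \eqref{e:3}, but you never actually carry out that verification, and one of your suggested devices cannot work: ``distributing the small defect in measure created by one strip into an adjacent modification'' is incompatible with \eqref{e:3}, which is a pointwise condition in $y$ --- a deficit in $\sum_{x\in F^{-1}(y)}1/|F'(x)|$ at a height $y$ inside one strip cannot be compensated at different heights in another strip. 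Even your ``localized'' version is incomplete: if an arc originally reaches the irrational peak $c_i$ and you lower the peak to a nearby rational, then for heights $y$ between the new and old peak that arc contributes nothing to the fiber sum, so the other arcs covering those heights (which sit elsewhere in $[0,1]$ and which you have not touched) would all have to be re-sloped simultaneously. So the hardest step of the lemma is exactly the one left as a sketch.

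The paper closes this gap with a cleaner device that removes all slope bookkeeping: instead of editing the graph of $f_m$, it \emph{post-composes} with an explicit map $g_{n,i}\in C_\lambda$ that is the identity outside the strip containing $c_i$ and a three-branch measure-preserving zigzag of that strip onto itself (Figure~\ref{fig1}, right). Since $g_{n,i}$ and $f_m$ both preserve $\lambda$, so does $g_{n,i}\circ f_m$ --- no verification of \eqref{e:3} is needed --- and the composition replaces the determining value $c_i$ by the rational values $k/n$ and $(k+1)/n$ while moving the map by at most $1/n$ in the uniform metric. Because distinct irrational determining values lie in distinct strips, these compositions can be applied successively without interference. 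If you rework your surgery so that each local modification is literally ``apply a measure-preserving self-map of the strip to the output,'' your argument becomes the paper's; as written, the measure-preservation step is asserted rather than proved.
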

\begin{proof} Consider a dense collection of piecewise affine maps $\{f_m\} \subset \Cl$ (Figure \ref{fig1} left), each $f_m$ has finitely
many  determining points. 
Fix $m$ and choose $n = n(m)$  so large that 
each of the intervals $[k/n,k+1/n]$ contains at most one  determining value.
For each irrational  determining value $c_i$   of $f_m$ we consider the map $g_{n,i} \circ f_m$, where $g_{n,i}$ is the map of Figure \ref{fig1} right.  
The resulting map has
 the  determining value  $c_i$ replaced by $k/n$  and $(k+1)/n$.  We progressively remove all irrational  determining
 values of $f_m$ to obtain a map $h_m$ with only rational  determining values. 
 The set $\{h_m\}$ is dense in $\Cl$ since $\rho(f_m,h_m) \to 0$. 
 \end{proof}

The proof of Theorem \ref{thma} follows the strategy of the proof of \cite[Theorem 2.2]{BrGa}.  
We use the terminology of the proof from \cite{BrGa}. 
In particular, for any $n \in \N$, the set $A_n$  denotes the set of functions $f \in \Cl$
for which there exist $\gamma \in [-n,n]$ and $x \in [0,1]$ such that $f_{-\gamma}(t) \le f_{-\gamma}(x)$
when $t \in [0,1] \cap (x-1/n,x)$ and $f_{-\gamma}(t) \ge f_{-\gamma}(x)$
when $t \in [0,1] \cap (x,x + 1/n)$. 
Remember that $f_{-\gamma}$ was defined 
in \eqref{e:f-gamma}, here we changed the notation of \cite{BrGa} replacing their $\lambda$ by $\gamma$ to avoid a collision of terminologies.

\begin{proof}[Proof of Theorem \ref{thma}]
Except for one step, the strategy of the proof  is the same as in \cite{BrGa}; this difference is due to the fact of the preserving of Lebesgue measure.  The step which differs is showing
that each $A_n$ is nowhere dense,  this step is the most technical step in the proof in \cite{BrGa}. 
To show this we proceed as illustrated in Figure \ref{fig2}. We know that piecewise affine maps are dense in $C_{\lambda}$ \cite{BT}. So from now $n\in \mathbb N$ 
defining $A_n$ is fixed.

{\bf I.~A choice of $f$.} Using Lemma \ref{crv}, 
given an open set $U\subset C_{\lambda}$ and $\eps>0$ 
we can choose a piecewise affine map $f\in U$ whose determining values  
are rational such that  $B(f,\eps)\subset U$.
Let $q$ be the greatest common denominator of the 
determining values. Let  $1/\tau$ be a positive integer multiple of $q$ such that $\tau < \eps /3$.
For each integer $1 \le i \le  1/\tau $ consider the interval $J_i = [(i-1)\tau,i\tau]$.
These intervals satisfy

\begin{enumerate}[label*=(\roman*)]
\item $\lambda(J_i) = \tau$ for all $i$, 
\item $J^{\circ}_i\cap J^{\circ}_j=\emptyset$ for all $i \ne j$, 
\item each $J_i$ has no determining value in its interior, 
\item for each $i$, all points inside a $J_i$ have the same number $n(i)$ of preimages,
\item $f^{-1}(J_i^{\circ})$ has $n(i)$ connected components $[u_i^{k},v_i^k]$, $1\le k\le n(i)$,
\item $f([u_i^{k},v_i^k])=J_i$ for each $k$,
\item \label{inside} every window perturbation of $f$ on $[u_i^{k},v_i^k]$ remains in\\
$B(f,\tau) \subset  B(f,\eps/3)$; furthermore  this containment holds if we make a finite number of such window perturbations (see section 2.1 of \cite{BCOT3} for the definition of window perturbations).
\end{enumerate}

{\bf II.\ A perturbation $\tilde f$ of $f$.} 
We choose $\delta>0$ so small that 
\begin{equation}\nonumber\label{choicetau}  \delta < \text{min}_{i,k} \left (  (v_i^k - u^k_i)/2 \right )
\end{equation} 
and satsifying a further assumption which will be made in the next step.

If $x$ a common point of two adjacent $[u_i^k,v_i^k]$ intervals then we call the intervals 
$[x-\delta,x]$ and $[x,x+\delta]$ {\em $\delta$-marked intervals.}   
Each $\delta$-marked interval is contained in a single interval $[u_i^k,v_i^k]$ and its image
is contained in a single $J_i$ and $\delta$-marked intervals have pairwise disjoint interiors.
 We consider a (not necessarily regular) piecewise affine window perturbation $\tilde f\in B(f,\tau)$ of $f$ on all $[u_i^{k},v_i^k]$'s such that
 (see Figure \ref{fig2}, left)
\begin{enumerate}[label=(\roman*)]
\setcounter{enumi}{7}
\item
if an interval  $K \subset [u_i^{k},v_i^k]$ and $\lambda(K)=\delta$ then $K$ contains either at least three critical points of $\tilde f$ or two critical points of $\tilde f$ and one of the endpoints $u_i^{k},v_i^k$,\label{slope}
\item
if $[u_i^{1},v_i^1]$ is the leftmost interval and $f$ is increasing on $[u_i^{1},v_i^1]$ then $\tilde f(u_i^1)=f(v_i^1)$, $\tilde f(v_i^1)=f(v_i^1)$, the map $\tilde f$ has two laps on the $\delta$-marked interval $[v_i^{1}-\delta,v_i^1]$,
\item
if $[u_i^{n(i)},v_i^{n(i)}]$ is the rightmost interval and $f$ is increasing on $[u_i^{n(i)},v_i^{n(i)}]$ then $\tilde f(u_i^{n(i)})=f(u_i^{n(i)})$, $\tilde f(v_i^{n(i)})=f(u_i^{n(i)})$, the map $\tilde f$ has two laps on the $\delta$-marked interval $[u_i^{n(i)},u_i^{n(i)}+\delta]$,
\item
otherwise $\tilde f(u_i^k)=f(u_i^k)$ and $\tilde f(v_i^k)=f(v_i^k)$ for each $i,k$, the map $\tilde f$ has two laps on the  $\delta$-marked intervals $[u_i^k,u_i^k+\delta]$ and $[v_i^{k}-\delta,v_i^k]$.
\end{enumerate}
By construction, the map $\tilde f \in B(f,\tau) \subset B(f,\eps/3)$ fulfills (\ref{e:3}), so $\tilde f\in C_{\lambda}$. 

 \begin{figure}
\begin{minipage}{.5\textwidth}
	\begin{tikzpicture}[scale=6]
	\draw[dotted] (1/8,0)--(1/8,1);
	\draw[dotted] (1/4,0)--(1/4,1);
	\draw[dotted,blue] (0,1/3)--(1,1/3);
	\draw[dotted,blue] (0,2/3)--(1,2/3);
	\draw[dotted,blue] (2/3,0)--(2/3,1);
	\draw[dotted,blue] (5/6,0)--(5/6,1);
	\draw[dotted,blue] (11/24,0)--(11/24,1);
	\draw(0,0)--(0,1)--(1,1)--(1,0)--(0,0);
	\draw(0,2/3)--(1/64,1/3)--(2/64,2/3)--(3/64,1/3)--(4/64,2/3)--(5/64,1/3)--(6/64,2/3);
	\draw[red](6/64,2/3)--(7/64,1/3)--(8/64,2/3)--(8/56,1)--(9/56,2/3);
	\node[circle,fill, inner sep=1] at (1/8,2/3){};
	\node[circle,fill, inner sep=1] at (1/4,1){};
	\node[circle,fill, inner sep=1] at (11/24,2/3){};
	\node[circle,fill, inner sep=1] at (2/3,1/3){};
	\node[circle,fill, inner sep=1] at (5/6,0){};
	\node[circle,fill, inner sep=1,red] at (111/264,2/3){};
	\node[circle,fill, inner sep=1,red] at (131/264,2/3){};
	\node[circle,fill, inner sep=1,red] at (166/264,1/3){};
	\node[circle,fill, inner sep=1,red] at (186/264,1/3){};
	\node[circle,fill, inner sep=1,red] at (6/64,2/3){};
	\node[circle,fill, inner sep=1,red] at (9/56,2/3){};
	\node[circle,fill, inner sep=1,red] at (111/264,0){};
	\node[circle,fill, inner sep=1,red] at (131/264,0){};
	\node[circle,fill, inner sep=1,red] at (166/264,0){};
	\node[circle,fill, inner sep=1,red] at (186/264,0){};
	\node[circle,fill, inner sep=1,red] at (6/64,0){};
	\node[circle,fill, inner sep=1,red] at (9/56,0){};
	\node[circle,fill, inner sep=1] at (0,2/3){};
	\draw[red,thick] (111/264,0)--(131/264,0);
	\draw[red,thick] (166/264,0)--(186/264,0);
	\draw[red,thick] (6/64,0)--(9/56,0);
		\node[circle,fill, inner sep=1] at (1,0){};
			\node[blue] at (0, -0.05){\tiny $0$};
				\node[blue] at (1, -0.05){\tiny $1$};
					\node[blue] at (-0.05,1/3){\tiny $\frac{1}{3}$};
						\node[blue] at (-0.05,2/3){\tiny $\frac{2}{3}$};
							\node[red] at (1/8, 0.04){\small $2\delta$};
								\node[blue] at (1/16, -0.1) {\tiny $[u^1_2,v^1_2]$};
									\node[blue] at (0.291, -0.1){ \tiny $[u^1_3,v^1_3]$};
												\node[blue] at (9/16, -0.1){ \tiny $[u^2_2,v^2_2]$};
															\node[blue] at (5/6, -0.1){ \tiny $[u^1_1,v^1_1]$};
															\draw[blue](0,0)--(1/16,-0.05)--(1/8,0);
															\draw[blue](1/8,0)--(0.291,-0.05)--(11/24,0);
															\draw[blue](11/24,0)--(9/16,-0.05)--(2/3,0);
															\draw[blue](2/3,0)--(5/6,-0.05)--(1,0);
																\node[blue] at (-0.05,1/6){\tiny $J_1$};
																	\node[blue] at (-0.05,1/2){\tiny $J_2$};
																		\node[blue] at (-0.05,5/6){\tiny $J_3$};
	
	\draw(9/56,2/3)--(10/56,1)--(11/56,2/3)--(12/56,1)--(13/56,2/3)--(14/56,1);
	
	\draw(14/56,1)--(71/264,2/3)--(76/264,1)--(81/264,2/3)--(86/264,1)--(91/264,2/3)--(96/264,1)--(101/264,2/3)--(106/264,1)--(111/264,2/3);
	\draw[red](111/264,2/3)--(116/264,1)--(126/264,1/3)--(131/264,2/3);
	\draw (131/264,2/3)--(136/264,1/3)--(141/264,2/3)--(146/264,1/3)--(151/264,2/3)--(156/264,1/3)--(161/264,2/3)--(166/264,1/3);
	\draw[red](166/264,1/3)--(171/264,2/3)--(176/264,1/3)--(181/264,0)--(186/264,1/3);

	\draw(186/264,1/3)--(31/44+17/1188,0)--(31/44+34/1188,1/3)--(31/44+51/1188,0)--(31/44+68/1188,1/3)--(31/44+85/1188,0)--(31/44+103/1188,1/3)--(31/44+120/1188,0)--(31/44+137/1188,1/3)--(31/44+154/1188,0)--(31/44+154/1188+1/48,1/3)--(31/44+154/1188+2/48,0)--(31/44+154/1188+3/48,1/3)--(31/44+154/1188+4/48,0)--(31/44+154/1188+5/48,1/3)--(31/44+154/1188+6/48,0)--(31/44+154/1188+7/48,1/3)--(1,0);

	\draw[blue] (0,1/3)--(1/4,1)--(2/3,1/3)--(5/6,0)--(1,1/3);
	\end{tikzpicture}
\end{minipage}\nolinebreak
\begin{minipage}{.5\textwidth}
	\begin{tikzpicture}[scale=6]
	\draw[dotted] (1/8,0)--(1/8,1);
	\draw[dotted] (1/4,0)--(1/4,1);
	\draw[dotted,blue] (0,1/3)--(1,1/3);
	\draw[dotted,blue] (0,2/3)--(1,2/3);
	\draw[dotted,blue] (2/3,0)--(2/3,1);
	\draw[dotted,blue] (5/6,0)--(5/6,1);
	\draw[dotted,blue] (11/24,0)--(11/24,1);
	\draw(0,0)--(0,1)--(1,1)--(1,0)--(0,0);
	\draw(0,2/3)--(1/64,1/3)--(2/64,2/3)--(3/64,1/3)--(4/64,2/3)--(5/64,1/3)--(6/64,2/3);
	\draw[red](6/64,2/3)--(7/64,1)--(8/64,2/3)--(8/56,1/3)--(9/56,2/3);
	\node[circle,fill, inner sep=1] at (1/8,2/3){};
	\node[circle,fill, inner sep=1] at (1/4,1){};
	\node[circle,fill, inner sep=1] at (11/24,2/3){};
	\node[circle,fill, inner sep=1] at (2/3,1/3){};
	\node[circle,fill, inner sep=1] at (5/6,0){};
	\node[circle,fill, inner sep=1,red] at (111/264,2/3){};
	\node[circle,fill, inner sep=1,red] at (131/264,2/3){};
	\node[circle,fill, inner sep=1,red] at (166/264,1/3){};
	\node[circle,fill, inner sep=1,red] at (186/264,1/3){};
	\node[circle,fill, inner sep=1,red] at (6/64,2/3){};
	\node[circle,fill, inner sep=1,red] at (9/56,2/3){};
	\node[circle,fill, inner sep=1,red] at (111/264,0){};
	\node[circle,fill, inner sep=1,red] at (131/264,0){};
	\node[circle,fill, inner sep=1,red] at (166/264,0){};
	\node[circle,fill, inner sep=1,red] at (186/264,0){};
	\node[circle,fill, inner sep=1,red] at (6/64,0){};
	\node[circle,fill, inner sep=1,red] at (9/56,0){};
	\node[circle,fill, inner sep=1] at (0,2/3){};
	\draw[red,thick] (111/264,0)--(131/264,0);
	\draw[red,thick] (166/264,0)--(186/264,0);
	\draw[red,thick] (6/64,0)--(9/56,0);
		\node[circle,fill, inner sep=1] at (1,0){};
	
			\node[blue] at (0, -0.05){\tiny $0$};
				\node[blue] at (1, -0.05){\tiny $1$};
					\node[blue] at (-0.05,1/3){\tiny $\frac{1}{3}$};
						\node[blue] at (-0.05,2/3){\tiny $\frac{2}{3}$};
						
							\node[red] at (1/8, 0.04){\small $2\delta$};
								\node[blue] at (1/16, -0.1) {\tiny $[u^1_2,v^1_2]$};
									\node[blue] at (0.291, -0.1){ \tiny $[u^1_3,v^1_3]$};
												\node[blue] at (9/16, -0.1){ \tiny $[u^2_2,v^2_2]$};
															\node[blue] at (5/6, -0.1){ \tiny $[u^1_1,v^1_1]$};
															\draw[blue](0,0)--(1/16,-0.05)--(1/8,0);
															\draw[blue](1/8,0)--(0.291,-0.05)--(11/24,0);
															\draw[blue](11/24,0)--(9/16,-0.05)--(2/3,0);
															\draw[blue](2/3,0)--(5/6,-0.05)--(1,0);
																\node[blue] at (-0.05,1/6){\tiny $J_1$};
																	\node[blue] at (-0.05,1/2){\tiny $J_2$};
																		\node[blue] at (-0.05,5/6){\tiny $J_3$};
	
	\draw(9/56,2/3)--(10/56,1)--(11/56,2/3)--(12/56,1)--(13/56,2/3)--(14/56,1);
	
	\draw(14/56,1)--(71/264,2/3)--(76/264,1)--(81/264,2/3)--(86/264,1)--(91/264,2/3)--(96/264,1)--(101/264,2/3)--(106/264,1)--(111/264,2/3);
	\draw[red](111/264,2/3)--(116/264,1/3)--(126/264,1)--(131/264,2/3);
	\draw (131/264,2/3)--(136/264,1/3)--(141/264,2/3)--(146/264,1/3)--(151/264,2/3)--(156/264,1/3)--(161/264,2/3)--(166/264,1/3);
	\draw[red](166/264,1/3)--(171/264,0)--(176/264,1/3)--(181/264,2/3)--(186/264,1/3);
	
	\draw(186/264,1/3)--(31/44+17/1188,0)--(31/44+34/1188,1/3)--(31/44+51/1188,0)--(31/44+68/1188,1/3)--(31/44+85/1188,0)--(31/44+103/1188,1/3)--(31/44+120/1188,0)--(31/44+137/1188,1/3)--(31/44+154/1188,0)--(31/44+154/1188+1/48,1/3)--(31/44+154/1188+2/48,0)--(31/44+154/1188+3/48,1/3)--(31/44+154/1188+4/48,0)--(31/44+154/1188+5/48,1/3)--(31/44+154/1188+6/48,0)--(31/44+154/1188+7/48,1/3)--(1,0);
	
	\draw[blue] (0,1/3)--(1/4,1)--(2/3,1/3)--(5/6,0)--(1,1/3);
	\end{tikzpicture}
\end{minipage}
\caption{ Left: the map $\tilde f$ in step II. 
 Right: the map $F$ in step  III. }\label{fig2}
\end{figure}
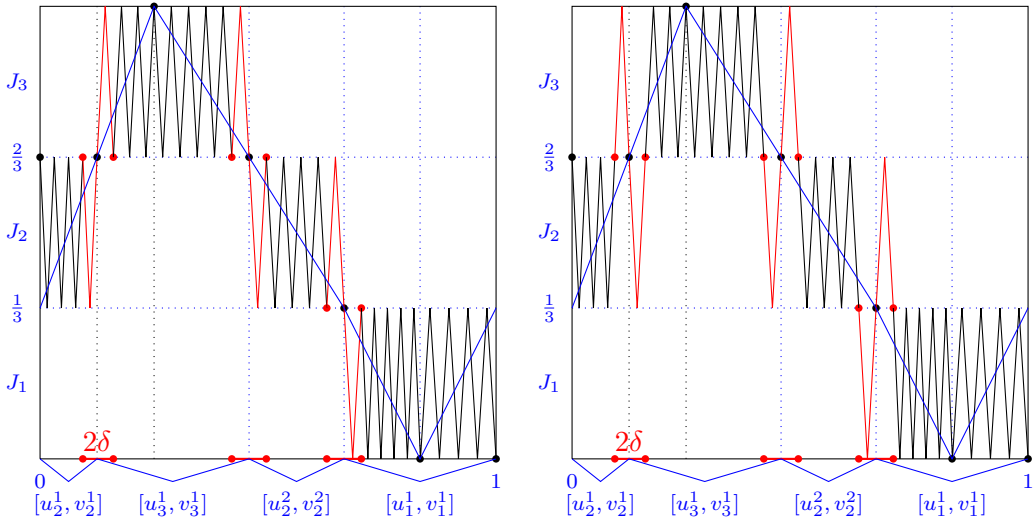

{\bf III. Constructing $F$ from $\tilde f$.} Finally we construct the map $F$, adapting the map $\tilde{f}$, such that
 (see Figure \ref{fig2}, right)
\begin{itemize}
\item[(xii)] the graphs of $\tilde f$ and $F$ coincide outside of the $\delta$-marked intervals,
\item[(xiii)]  if $J=[d,e]$ is a $\delta$-marked interval, then $\tilde f(d)=\tilde f(e)$; the graph of $F\vert_{J}$ is a  copy of the graph $\tilde f\vert_{J}$ reflected in the line $y=f(d)$,  i.e., $F(x) = 2 \tilde f(d) - \tilde f(x) $; equivalently if $J=[d,e]$ is the union of the two consecutive  $\delta$-marked intervals, then $\tilde f(d)=\tilde f(e)$; the graph of $F\vert_{J}$ is a  copy of the graph $\tilde f\vert_{J}$ reversed,  i.e., $F(x) = \tilde f(d+e - x)$.
\end{itemize}

By  construction, the map $F$ fulfills (\ref{e:3}), so $F\in C_{\lambda}$.
 By \ref{inside}, $F \in B(\tilde f, 2 \tau) \subset B(f,\eps)\subset U$.
 Let us show that $F\notin A_n$. Notice that by (\ref{choicetau}) and \ref{slope} the map $F$ has absolute value of the slope
greater than $10n$ on each affine part of its graph. 

If $x\in [0,1]$ and $F$ is decreasing on some one-sided neighborhood $V$ of $x$  then $F_{-\gamma}$, $\gamma\in [-n,n]$, has its derivative less than $-9n$ on the same neighborhood $V$ of $x$, so either there is $t\in (x,x+\frac{1}{n})$ with $F_{-\gamma}(x)>F_{-\gamma}(t)$ or $t\in (x-\frac{1}{n},x)$ for which $F_{-\gamma}(t)>F_{-\gamma}(x)$. This is the case when $x$ is an endpoint or a critical point of $F$ or such that $F$ is decreasing on some two-sided neighborhood of $x$.

It remains to consider the case when $x\in (0,1)$ and $F$ is increasing on the two-sided neighborhood of $x$. Let $x\in [u_i^k,v_i^k]$. We distinguish several possibilities. 

For convenience denote the endpoints of the interval $J_i$ by  $a_i = (i-1) \tau$ and $b_i = i \tau$.

We additionally assume that
\begin{equation*}
\delta < \frac{\tau}{10n}.
\end{equation*}
The first case is when $F(x)\in J_i$ and $F(x)\le (a_{i}+b_{i})/2$. In this case by \ref{slope} there has to be a point $y\in [u_i^k,x)$ for which $x-y<3\delta$ and $F(y)=b_i $. Then since 
\begin{equation*}\label{eineq}3\delta<\frac{3\tau}{10n}<\frac{3}{10n}<\frac1{n}
\quad \text{  and  } \quad 
\frac{F(x)-F(y)}{x-y}<-\frac{\tau}{6\delta}<-\frac{5n}{3},
\end{equation*}
we obtain 
\begin{equation*}\frac{F_{-\gamma}(x)-F_{-\gamma}(y)}{x-y}<-\frac{2n}{3}\text{ for }y\in (x-\frac{1}{n},x).
\end{equation*}

The second case is $F(x)\in J_i$ and $F(x)> (a_{i}+b_{i})/2$.  Again by \ref{slope} there has to be a point $y\in (x,v_i^k)$ for which $y-x<3\delta$ and $F(y)=a_i$. Analogously as before  
\begin{equation*}
\frac{F(x)-F(y)}{x-y}<-\frac{5n}{3},
\end{equation*}
hence  
\begin{equation*}\frac{F_{-\gamma}(x)-F_{-\gamma}(y)}{x-y}<-\frac{2n}{3}\text{ for }y\in (x,x+\frac{1}{n}).
\end{equation*}
 
The last case is when  $F(x)\in J_{\ell}
$ with $\ell\neq i$. Then 
 
 - if $\ell=i+1$, one can proceed analogously as above taking $y$ the closest critical point to the right of $x$  where the local minimum is attained,\\ 
 - if $\ell=i-1$, one can proceed analogously as above taking $y$ the closest critical point to the left of $x$ where the local maximum is attained.

This finishes the proof of (i).
The assertion (ii) on level sets follow using the same arguments 
as in Theorem 3.3 in \cite{BrGa}, but replacing their Theorem 2.2
by (i). Note that if $f\in C_{\lambda}$ then $\min f=0$ and $\max f=1$.
\end{proof}

\section{Consequences for a parametrized family of planar attractors}\label{sec:planarattractors}

In this section, we apply Theorem~\ref{thm1} and Corollary~\ref{cor:ae} to provide a better understanding of some planar attractors from \cite{CO}. 
We find these results surprising, because  they highlights the dichotomy between topological and measure-theoretic perception of these attractors.
Before we proceed we need some additional terminology.

Let $X$ be a compact metric space and let $F\colon X\to X$ be continuous.
Define
\begin{equation*}
\hat X_F:=\underleftarrow{\lim} (X,F)
=
\{\hat x:=\big(x_{0},x_1,x_2,\ldots \big) \in \Pi^{\infty}_{i=1} X : x_i=F(x_{i+1}), \forall i \geq 0\},
\end{equation*}
where $F$ is called the {\em bonding map}.
We equip the {\em inverse limit} $\hat X_F$ with the subspace
metric which is induced by
\emph{product metric} in $\Pi^{\infty}_{i=1} X_i$.
We can define the shift homeomorphism $\hat F: \hat X_F\to \hat X_F$  defined for $(x_{0},x_1,x_2,\ldots)\in \hat X_F$ by 
\begin{equation*} 
\hat F((x_{0},x_1,x_2,\ldots))=(F(x_{0}),F(x_1),F(x_2),\ldots)=(F(x_0),x_0,x_1,\ldots).
\end{equation*}

Denote by $\pi_n:\hat X_F\to X$ the coordinate projection maps. Recall also that $\mathcal{B}(X)$ denotes the $\sigma$-algebra of Borel sets in $X$. 

For the details of what follows we refer the reader to \cite{CO}.
Let $\mathcal{L}$ be the Lebesgue measure on a topological disk $\mathcal{D}$ and $\lambda$ the Lebesgue measure on $I=[0,1]$. Suppose that
$I\times\{0\}\subset \mathrm{int}(\mathcal{D})$ and let $f:I\to I$ be a continuous map. The map $f$ can be extended to a map $F$ defined as a uniform limit of homeomorphisms of $\mathcal{D}$. Let us denote 
$\hat{\mathcal{D}}:=\underleftarrow{\lim} (\mathcal{D},F)$; by Brown's theorem $\hat{\mathcal{D}}$ is again a topological disk.
Let $\mathcal{B}(\hat{\mathcal{D}})$ be the smallest $\sigma$-algebra on $\hat{\mathcal{D}}$ 
such that all the projection maps $\pi_i$ are measurable. Then there exists a unique probability measure $\hat{\mathcal{L}}$ on $\mathcal B(\hat D)$  such that $\hat{\mathcal{L}}(\pi^{-1}_n(A))=\mathcal{L}(A)$
 for all $A\in \mathcal B(\mathcal{D})$ and each $n\in \N_0$ where (the measure $\hat \lambda$ is defined in an analogous way).

Let us recall a result from \cite{Parth}.
Let $\mu$ be an $F$-invariant  Borel probability measure on $\mathcal{D}$. The set $B_{\mu}$ which consists of all points $x\in \mathcal{D}$ such that 
\begin{equation*}
\lim_{n\to \infty} \frac{1}{n}\sum_{i=0}^{n-1}g(F^{i}(x))=\int g \/d\mu
\end{equation*}
holds for all continuous maps  $g:\mathcal{D}\to \mathbb{R}$ is called the \emph{basin of $\mu$ for $F$}.  It
is well known that $B_{\mu}$ is  a Borel set (see for example \cite[Prop. 4.7]{DGS}). 
We call the measure $\mu$ \emph{physical for $F$} if $\mathcal{L}(B_\mu)>0$. 
An invariant measure $\hat{\nu}$ for the natural extension $\hat F:\hat{\mathcal{D}}\to \hat{ \mathcal{D}}$ is called an \emph{inverse limit physical measure} if $\hat{\nu}$ has a basin $\hat B_{\hat{\nu}}$ so that  $\mathcal{L}(\pi_0(\hat B_{\hat{\nu}}))>0$.

By results from \cite{KRS} if $\mathcal{L}$ is a physical measure for $F$  then the induced measure $\hat{\mathcal{L}}$ on $\hat{\mathcal{D}}$ is an inverse limit physical measure for the natural extension $\hat{F}$. In particular, there is a basin $\hat B_{\hat{\mathcal{L}}}:=\pi_0^{-1}(B)$ of $\hat{\mathcal{L}}$ for $\hat F$ with $\mathcal{L}(B)>0$.
In particular, this statement holds for $\lambda$, $I$ and $f$.

The pseudo-arc is a very curious object arising from Continuum Theory (see the survey of Lewis \cite{Lewis} and the introduction of \cite{BCO} for the overview of results involving the pseudo-arc).
Its complicated structure is reflected by the fact that it is {\em hereditarily indecomposable}, i.e., there are no proper subcontinua $A,B\subset H$ 
such that $A\cup B=H$ for every subcontinuum $H$ of the pseudo-arc $P$. In this sense its structure is in complete opposition to intuitive perception 
of an arc.
Let us recall two recent results relating the maps in $C_\lambda$ with pseudo-arc and planar attractors.

\begin{theorem*}\cite[Theorem 1.1]{CO}
The inverse limit with any $C_{\lambda}(I)$-generic map as the bonding map is the pseudo-arc.
\end{theorem*}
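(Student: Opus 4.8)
The plan is to combine Bing's topological characterization of the pseudo-arc with a crookedness criterion for inverse limits, and then to establish that crookedness of some iterate holds generically in $C_\lambda$ --- which is precisely item \textit{(8)} of the theorem recalled just above, also proven in \cite{CO}. Recall that a surjection $g\colon I\to I$ is \emph{$\delta$-crooked} if for every $a<b$ in $I$ there are points $a\le c\le d\le b$ with $|g(d)-g(a)|<\delta$ and $|g(b)-g(c)|<\delta$. An inverse limit of intervals with surjective bonding maps is chainable, so by Bing's theorem it is the pseudo-arc as soon as it is a nondegenerate hereditarily indecomposable continuum; and a classical crookedness criterion (going back to Bing, see also the exposition in \cite{CO}) guarantees hereditary indecomposability of $\underleftarrow{\lim}(I,f)$ whenever for every $\delta>0$ there is an $n\in\N$ with $f^n$ $\delta$-crooked. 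Hence it suffices to prove that
\begin{equation*}
\mathcal{G} := \bigcap_{k\ge 1}\mathcal{C}_{1/k}, \qquad \mathcal{C}_\delta := \{\, f\in C_\lambda : \exists\, n\in\N \text{ such that } f^n \text{ is } \delta\text{-crooked}\,\},
\end{equation*}
is a dense $G_\delta$ subset of $C_\lambda$.

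Since $C_\lambda$ is a closed subspace of $C(I)$ in the uniform metric, it is a Baire space, so it is enough to check that each $\mathcal{C}_\delta$ is open and dense. Openness: if $f^n$ is $(\delta/2)$-crooked, then for each pair $a<b$ the witnessing points $c,d$ still satisfy the $\delta$-inequalities for any $g$ with $\rho(f^n,g^n)<\delta/4$, and since $g\mapsto g^n$ is continuous in the uniform metric, a whole $\rho$-ball around $f$ lies in $\mathcal{C}_\delta$. Density is the crux. Given $f\in C_\lambda$ and $\eps>0$, first use the density of piecewise affine \emph{leo} maps in $C_\lambda$ (in fact an open dense set, as recalled above) to pass to a nearby piecewise affine leo map $f_0\in C_\lambda$. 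Then, using the window-perturbation technique of \cite{BT,BCOT3}, perform finitely many window perturbations of $f_0$ on a sufficiently fine partition and with carefully prescribed turning-point heights, producing $g\in C_\lambda$ with $\rho(f,g)<\eps$ whose graph carries many small, appropriately nested oscillations; one then checks by direct estimates that, thanks to the leo-type expansion of $f_0$ together with these inserted oscillations, the iterate $g^n$ folds back and forth within $\delta$ over every subinterval once $n$ is large, i.e.\ $g^n$ is $\delta$-crooked. This exhibits an element of $\mathcal{C}_\delta$ within $\eps$ of $f$.

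Granting openness and density of each $\mathcal{C}_\delta$, the Baire category theorem makes $\mathcal{G}$ a dense $G_\delta$ in $C_\lambda$, and for $f\in\mathcal{G}$ the crookedness criterion identifies $\underleftarrow{\lim}(I,f)$ as a nondegenerate hereditarily indecomposable chainable continuum, hence the pseudo-arc. The main obstacle is the density step, where two requirements pull against each other: one must stay inside $C_\lambda$, which rigidly constrains the slopes of any window perturbation through the Lebesgue-preservation identity \eqref{e:3}, and at the same time one must force crookedness to accumulate under iteration --- which, as the tent map already shows (its inverse limit is the Knaster continuum, not the pseudo-arc), is not achieved by arbitrary leo maps and requires tracking the orbits $g(y),g^2(y),\dots,g^n(y)$ of the newly created turning points and verifying that after $n$ steps every subinterval of $I$ is folded to within $\delta$. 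Reconciling an aggressive enough perturbation to produce this crookedness with both measure preservation and $\eps$-closeness is exactly the delicate point, and it is the analogue here of the hardest step (nowhere density of the sets $A_n$) in the proof of Theorem \ref{thma}.
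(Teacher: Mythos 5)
The statement you are proving is quoted in this paper from \cite{CO} and is not reproved here; your overall route --- Bing's characterization of the pseudo-arc, the Minc--Transue-type criterion reducing hereditary indecomposability of $\underleftarrow{\lim}(I,f)$ to the existence, for each $\delta>0$, of an iterate $f^n$ that is $\delta$-crooked, and then genericity of that crookedness property --- is exactly the one taken in \cite{CO} (it is also precisely how item \emph{(8)} of the recalled theorem combines with the criterion to yield the statement). Your reduction steps are sound: chainability and nondegeneracy are automatic for surjective interval bonding maps (generic maps in $C_\lambda$ are leo, hence onto), and your openness argument for $\mathcal{C}_\delta$ is correct, since $\delta/2$-crookedness of $f^n$ together with $\rho(f^n,g^n)<\delta/4$ gives $\delta$-crookedness of $g^n$ by the triangle inequality.

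The genuine gap is the density of $\mathcal{C}_\delta$, which is the entire technical content of the theorem and which you do not carry out: you say that window perturbations ``with carefully prescribed turning-point heights'' produce a $g$ for which ``one then checks by direct estimates'' that $g^n$ is $\delta$-crooked for large $n$. No such check is given, and it cannot be routine. As your own tent-map remark shows, leo-type expansion does not by itself make iterates crooked, and a single layer of inserted oscillations is generally destroyed rather than refined under iteration unless the folds created at one scale are mapped onto the folds at the next scale in a precisely nested way; moreover the admissible fold slopes are rigidly constrained by the Lebesgue-preservation identity \eqref{e:3}, so turning-point heights cannot be prescribed freely. What is missing is the inductive scheme that constitutes the bulk of \cite{CO}: a perturbation making $g$ crooked at a definite scale relative to a fine partition, together with a propagation lemma showing that this finite-scale crookedness, composed with the expansion of the underlying leo map, forces $g^{n}$ to be $\delta$-crooked for suitable $n$. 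As written, the proposal correctly reduces the theorem to its hardest lemma and then asserts that lemma.
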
 

\begin{theorem*}\cite[Theorem 1.6]{CO}
There exists a dense $G_{\delta}$ subset $G$ of $C_{\lambda}(I)$ and a parametrized family of homeomorphisms $\{\Phi_f\}_{f\in G}\subset \mathcal{H}(\mathcal{D}, \mathcal{D})$ varying continuously with $f$, having $\Phi_f$-invariant pseudo-arc attractors $\Lambda_f\subset D$ for every $f\in G$ so that 
 	\begin{enumerate}
		\item\label{item:a}  $\Phi_f|_{\Lambda_f}$ is topologically conjugate to $\hat{f}\colon \hat I_f\to \hat I_f$.
		\item\label{item:b} The attractors $\{\Lambda_f\}_{f\in G}$ vary continuously  in the Hausdorff topology. 
		\item\label{item:c} For each $f \in G$ the measure  $\mu_f := \hat{\lambda}$ induced by $(\lambda,f)$ is a weakly mixing, $\Phi_f$-invariant, inverse limit physical measure supported on the attractor $\Lambda_f$.\footnote{Here we use the fact that $\mathcal{D}$ is homeomorphic to some $\hat{\mathcal{D}}_{F_f}$
where maps $F_f$ vary continuously with $f$.} 
		\item The measures $\mu_f$ vary continuously in the weak* topology.
 	\end{enumerate}
\end{theorem*}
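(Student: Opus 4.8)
The plan is to realise the inverse limit $\hat I_f=\underleftarrow{\lim}(I,f)$ as a planar attractor by the Barge--Martin construction (carried out in \cite{CO}), performed so that every ingredient depends continuously on the bonding map. First I would fix a \emph{canonical thickening}: embed $I$ as $I\times\{0\}$ in a disk $\mathcal{D}$, and for each $f\in C_\lambda$ build a homeomorphism $F_f\colon\mathcal{D}\to\mathcal{D}$ that is a uniform limit of homeomorphisms, agrees with $f$ in the first coordinate near $I\times\{0\}$, maps a closed disk neighbourhood $N$ of $I\times\{0\}$ into its interior, and satisfies $\bigcap_{n\ge 0}F_f^n(N)\cong\hat I_f$ with the shift map restricted there conjugate to $\hat f$. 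The usual ``thicken the graph of $f$'' recipe does this, and since the recipe only uses $f$ together with a fixed auxiliary family of bump functions, $f\mapsto F_f$ is continuous into $C(\mathcal{D},\mathcal{D})$. Brown's theorem identifies $\hat{\mathcal D}_{F_f}=\underleftarrow{\lim}(\mathcal D,F_f)$ with $\mathcal D$; by running a parametrised version of Brown's approximation argument these identifications can be chosen to vary continuously with $f$, and then $\Phi_f$ is $\hat F_f$ transported to $\mathcal D$. With $\Lambda_f:=\bigcap_{n\ge 0}\Phi_f^n(\mathcal D)$, item (1) is immediate because $\Phi_f|_{\Lambda_f}$ is conjugate to $\hat F_f|_{\hat I_f}=\hat f$.

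For item (2) I would prove Hausdorff continuity of $f\mapsto\Lambda_f$ via a uniform absorption estimate: for every $\varepsilon>0$ there is an $N$, independent of $f$ over any prescribed $\rho$-bounded family, such that $\Phi_f^N(\mathcal D)$ lies within Hausdorff distance $\varepsilon$ of $\Lambda_f$; this follows from the nested sets $\Phi_f^n(\mathcal D)$ shrinking to $\Lambda_f$ together with equicontinuity of $\{\Phi_f\}$. Since $f\mapsto\Phi_f^N(\mathcal D)$ is continuous (a fixed disk pushed forward by a composition that depends continuously on $f$), the triangle inequality for the Hausdorff metric gives the claim. Next, the measure statements. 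By construction $\hat\lambda$ is $\hat F_f$-invariant, hence $\mu_f$ is $\Phi_f$-invariant and, being carried by $\hat I_f\subset\Lambda_f$, is supported on $\Lambda_f$. Intersecting the dense $G_\delta$ set of \cite[Theorem 1.1]{CO} with the (still generic) sets on which $f$ is weakly mixing with respect to $\lambda$ and on which $\lambda$ is a physical measure for $f$ --- $\lambda$ is physical because it is ergodic, so Birkhoff's theorem makes its basin of full $\lambda$-measure and hence of positive Lebesgue measure --- one gets that $\mu_f$ is weakly mixing (the natural extension of a weakly mixing system is weakly mixing) and, by the result of \cite{KRS} quoted above, an inverse limit physical measure for $\hat F_f\cong\Phi_f$; this is item (3).

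Finally, for item (4) I would show $f\mapsto\mu_f$ is weak$^*$-continuous. Since all $\mu_f$ live on the fixed compact space $\mathcal D$, it suffices to fix $g\in C(\mathcal D)$ and show $f\mapsto\int g\,d\mu_f$ is continuous; writing the inverse limit measure as $\int g\,d\hat\lambda=\lim_n\int(g\circ\iota_{n,f})\,d\lambda$, where $\iota_{n,f}(x)$ records the first $n$ cylinder coordinates over $x$, and using uniform continuity of $g$ on $\mathcal D$ to control the tail uniformly in $f$, the problem reduces to continuity of finitely many integrals of the form $f\mapsto\int(\text{a continuous function of }(f,x))\,d\lambda(x)$, which is routine. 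The hard part will be the parametrised bookkeeping: arranging $F_f$, the Brown identifications $\hat{\mathcal D}_{F_f}\cong\mathcal D$, the absorbing estimate, and the cylinder approximations to be uniform (equivalently continuous) in $f$ \emph{simultaneously}. Once that uniform framework is in place, the conjugacy, the invariance and weak mixing of $\mu_f$, and physicality all follow from facts already recorded above.
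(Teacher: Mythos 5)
First, a point of reference: the paper you are looking at does not prove this statement at all --- it is quoted verbatim as \cite[Theorem 1.6]{CO} and used as a black box in Section~\ref{sec:planarattractors}. So there is no in-paper proof to compare against; the relevant comparison is with the construction in \cite{CO} itself. Measured against that, your outline is the right strategy: a Barge--Martin-type thickening of the graph of $f$ to a near-homeomorphism $F_f$ of $\mathcal D$, identification of $\underleftarrow{\lim}(\mathcal D,F_f)$ with $\mathcal D$ via Brown's theorem, transport of the shift $\hat F_f$ to get $\Phi_f$, the pseudo-arc from \cite[Theorem 1.1]{CO}, weak mixing of $\lambda$ from \cite{BT} passing to the natural extension, and physicality of $\hat\lambda$ from \cite{KRS}. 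Items (1), (3) and (4) are handled essentially as in \cite{CO}.

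The genuine gap is that the entire content of the theorem --- the word ``parametrized'' --- is exactly the step you defer. Saying ``by running a parametrised version of Brown's approximation argument these identifications can be chosen to vary continuously with $f$'' is a restatement of what must be proved, not a proof: Brown's theorem produces a homeomorphism $\hat{\mathcal D}_{F_f}\cong\mathcal D$ by an infinite approximation procedure with choices at every stage, and making those choices simultaneously and continuously over a dense $G_\delta$ of bonding maps is the technical heart of \cite[Theorem 1.6]{CO}. Without it, neither the continuity of $f\mapsto\Phi_f$ nor item (2) is established. Relatedly, your absorption estimate for item (2) does not follow from ``nestedness plus equicontinuity'': for each fixed $f$ the sets $\Phi_f^n(\mathcal D)$ decrease to $\Lambda_f$, but to get an $N$ with $d_H(\Phi_f^N(\mathcal D),\Lambda_f)<\varepsilon$ \emph{uniformly} for $f$ in a neighbourhood you need a quantitative, $f$-uniform contraction toward the thickened interval (a fibred structure built into $F_f$), not just pointwise Dini-type convergence over a non-compact family. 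Finally, a small slip in item (4): since $f$ is not invertible, a cylinder function of the coordinates $x_0,\dots,x_{n-1}$ is not a function of $x_0$; it should be pulled back to the deepest coordinate, $\int h\,d\hat\lambda=\int h\bigl(f^{n-1}(y),\dots,y\bigr)\,d\lambda(y)$, after which your continuity-in-$f$ argument goes through.
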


We can now interpret our results in this setting.  By Corollary~\ref{cor:ae} there is a set $X_f$ of full measure $\lambda$ such that $f|_{X_f}$ is invertible. Let
$Z_f:=\underleftarrow{\lim} (X_f,f)=\bigcap_{n=1}^\infty \pi_n^{-1}(X_f)$. Clearly
$\hat{\lambda}(Z_f)=1$ and $\pi_0|_{Z_f}$ is one-to-one onto $X_f$. Therefore, $\mu_f$ is isomorphic to $\lambda$ and from the ergodic theory point of view, dynamics of $(\Lambda_f,\Phi_f,\mu_f)$ and $(I,f,\lambda)$ are the same. Therefore, on one hand we observe very complicated topological structure of the attractor in the disc (pseudo-arc) and complicated topological dynamics on it (infinite topological entropy and topological mixing) but, on the other hand, from the physical perspective we see simple dynamics governed by $\lambda$ which is inherited from the interval
with zero  measure-theoretic entropy with respect to Lebesgue measure (yet weakly mixing).

\section{Acknowledgments}
We would like to thank Mark Pollicott for pointing out Parry's article \cite{Pa} to us.
The authors gratefully acknowledge the support of 
Slovenian research agency ARIS grant J1-4632. 
Our collaboration was also partially supported by the project AIMet4AI No.\ CZ.02.1.01/0.0/0.0/17\_049/0008414 and by the project from OP JAK, No. CZ.02.01.01/00/23\_021/0008759, which is co-financed by the European Union.

\begin{table}[ht]
	\begin{tabular}[t]{p{2.5cm}  p{11cm} }
		\includegraphics [width=2.1cm]{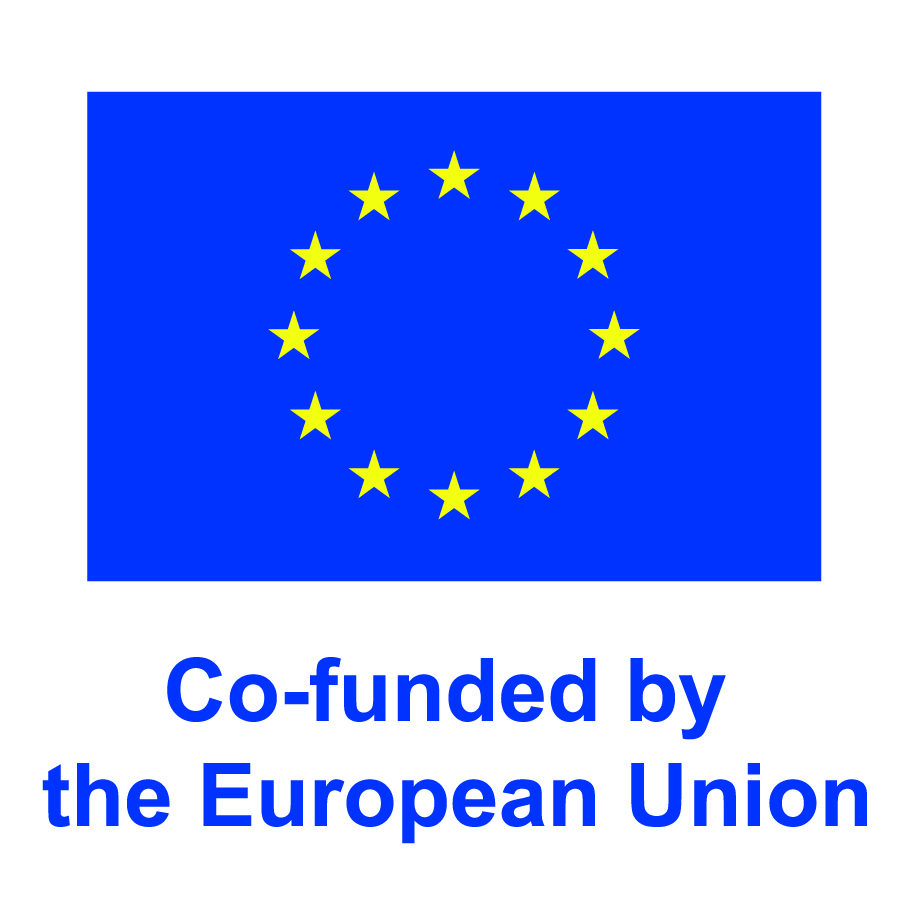} &
		\vspace{-2cm}
		This research is part of J.\ \v Cin\v c's  project that has received funding from the European Union's Horizon Europe research and innovation programme under the Marie Sk\l odowska-Curie grant agreement No.\ HE-MSCA-PF-PFSAIL-101063512.\\
	\end{tabular}
\end{table}

\end{document}